\documentclass[12pt,a4paper,draft]{amsart}
\usepackage{amsmath}
\usepackage{amsthm}
\usepackage{mathtools} 
\usepackage{hyperref}
\hypersetup{
 pdfauthor={Csaba Schneider and Igor Martins Silva},
 pdftitle={Invariants of Nilpotent Lie Algebras via Geometry and Algebra with a Focus on Computation},
}

\usepackage{thm-restate}
\usepackage{tikz, pgfplots}
\usetikzlibrary{matrix,arrows,patterns,decorations.pathreplacing,3d}
\usepackage{longtable}
\usepackage{array, multirow}
\newcolumntype{L}[1]{>{\raggedright\let\newline\\\arraybackslash\hspace{0pt}}m{#1}}
\newcolumntype{C}[1]{>{\centering\let\newline\\\arraybackslash\hspace{0pt}}m{#1}}
\newcolumntype{R}[1]{>{\raggedleft\let\newline\\\arraybackslash\hspace{0pt}}m{#1}}

\usepackage{LobsterTwo}
\usepackage{bm} %boldsymbol substituto 

\usepackage{tikz-cd}
\usepackage{listings}
\usepackage[utf8]{inputenc}
\usepackage{textgreek}

\DeclareMathAlphabet{\mathcaln}{OMS}{zplm}{m}{n} %definindo mathcal normal

\usepackage[bitstream-charter]{mathdesign}
\usepackage[shortlabels]{enumitem}
\usepackage{xcolor}
\usepackage[makeroom]{cancel}

\usepackage{algorithm}

\pgfplotsset{compat=1.18}

\lstdefinelanguage{Sage}[]{Python}
{morekeywords={False,sage,True},sensitive=true}
\definecolor{dblackcolor}{rgb}{0.0,0.0,0.0}
\definecolor{dbluecolor}{rgb}{0.01,0.02,0.7}
\definecolor{dgreencolor}{rgb}{0.2,0.4,0.0}
\definecolor{dgraycolor}{rgb}{0.30,0.3,0.30}

\newcommand{\fraku}{\mathfrak{u}}

\newcommand{\bbC}{\mathbb{C}}

\newcommand{\bbF}{\mathbb{F}}

\newcommand{\bbQ}{\mathbb{Q}}

\newcommand{\bbZ}{\mathbb{Z}}
\newcommand{\calnV}{\mathcaln{V}}

\newcommand{\bfc}{\bm{c}}

\newcommand{{\bff}}{\mathbf{f}}

\newcommand{\bfp}{{\bm{p}}}
\newcommand{\bfq}{{\bm{q}}}

\newcommand{\bfx}{{\bm{x}}}
\newcommand{\bfy}{{\bm{y}}}

\newtheorem{theorem}{Theorem}[section]
\newtheorem{lemma}[theorem]{Lemma}
\newtheorem{corollary}[theorem]{Corollary}
\newtheorem{proposition}[theorem]{Proposition}

\theoremstyle{definition}

\newtheorem{example}[theorem]{Example}

\numberwithin{equation}{section}

\DeclareMathOperator{\ad}{ad}

\DeclareMathOperator{\Der}{Der}

\DeclareMathOperator{\Frac}{Frac}
\DeclareMathOperator{\lt}{lt}
\DeclareMathOperator{\lm}{lm}

\DeclareMathOperator{\Mat}{Mat}
\DeclareMathOperator{\trdeg}{trdeg}

\makeatletter
\newcommand{\bigcdot}{\mathpalette\bigcdot@{.5}}
\newcommand*\bigcdot@[2]{\mathbin{\vcenter{\hbox{\scalebox{#2}{$\m@th#1\bullet$}}}}}
\makeatother

\newcommand{\polx}{\bbF[\bm{x}_{n}]}

\newcommand{\frpolx}{\bbF(\bm{x}_{n})}

\newcommand{\derfrpolx}{\text{Der}_{\bbF}\big(\frpolx\big)}

\newcommand{\Ann}{\textrm{Ann}}

\title[Invariants of nilpotent Lie algebras]{Invariants of Nilpotent Lie Algebras via Geometry and Algebra with a Focus on Computation}
\author{Csaba Schneider and Igor Martins Silva}
\address{Departamento de Matem\'atica\\
Universidade Federal de Minas Gerais\\
31270-901 Belo Horizonte, MG, Brazil}
\email{csaba.schneider@gmail.com,\ silvamigor@gmail.com} 
\urladdr{schcs.github.io/WP/}
\date{7 September 2026}

\begin{document}

\begin{abstract}
We consider the problem of computing rational invariants of nilpotent Lie algebras. 
We compare two methods that are commonly used for this task: the method of integral curves and 
the Dixmier map. Given a derivation of a rational function field with polynomial coefficients, we formulate a condition under which the kernel can be recovered from a family of rational integral curves, and we show that triangular derivations satisfy this hypothesis.
This yields an explicit description of the kernel as a purely transcendental extension and produces algebraically independent generators. We also show that, in the triangular case, the resulting generators agree with those obtained from the Dixmier map via a local slice. 
A careful analysis of the generating set obtained from this method leads to 
an algorithm for computing generators of the rational invariant field of a nilpotent Lie algebra. 
An implementation of the methods is available in the SageMath system.
\end{abstract}
\maketitle

\section{Introduction}
\label{introduction}
Let \(L\) be a finite-dimensional Lie algebra over a field \(\bbF\) of characteristic zero. The adjoint action of \(L\) on itself extends, by the Leibniz rule, to an action of \(L\) by derivations on the symmetric algebra \(\bbF[L]\), and consequently on its field of fractions \(\bbF(L)\). The subalgebra \(\bbF[L]^L\) of polynomial invariants and the subfield \(\bbF(L)^L\) of rational invariants, consisting of the elements annihilated by \(\ad(x)\) for all \(x\in L\), record fundamental structural information about \(L\) and appear throughout mathematics and mathematical physics. On the mathematical side, Duflo's theorem identifies \(\bbF[L]^L\) with the center of the universal enveloping algebra \(U(L)\) for finite-dimensional Lie algebras over 
fields of characteristic zero, with direct consequences for the representation theory of \(L\); invariants are also used as practical tools for distinguishing non-isomorphic Lie algebras in classification programmes. On the physical side, polynomial invariants are precisely the Casimir operators of \(L\), whose eigenvalues classify the irreducible representations relevant to a given physical system and correspond to conserved quantities, such as angular momentum, associated with the symmetries encoded by \(L\).

Determining a minimal generating set for \(\bbF[L]^L\) explicitly is, in general, a difficult problem, and the algebra \(\bbF[L]^L\) may not even be finitely generated: examples of this phenomenon, related to Hilbert's fourteenth problem, arise as kernels of locally nilpotent derivations~\cite{Rob90,ACN25}. The rational invariant field \(\bbF(L)^L\) is more tractable: a classical theorem of Dixmier~\cite{Dix57} shows, 
for nilpotent Lie algebras $L$, that $\bbF(L)^L$  is always a purely transcendental extension of \(\bbF\), and \(\bbF(L)^L\) still carries useful information about \(\bbF[L]^L\), to which it is closely related. We approach this problem with the explicit aim of 
developing an algorithm to compute generators for the rational invariant field of a nilpotent Lie algebra
that is suitable for machine computation.

In the theoretical physics literature, the invariants of a Lie algebra are usually approached through the \emph{method of integral curves}, also known as the \emph{method of characteristics}: since an invariant is, by definition, a first integral of the system of linear partial differential equations \(\ad(x)(f)=0\), \(x\in L\), one integrates the corresponding vector fields and looks for functions that are constant along the resulting curves. This approach is described at length, and applied to numerous families of low-dimensional Lie algebras, in the monograph of \v{S}nobl and Winternitz~\cite[Chapter~3]{SW14}. In that treatment, and in much of the related physics literature, the method is applied on a case-by-case basis: for each Lie algebra, the associated system of ordinary differential equations is integrated by hand, candidate invariants are guessed from the resulting integral curves and then verified directly. No general algorithm, however, is given, nor are precise conditions formulated under which the method is guaranteed to succeed and to produce a complete set of algebraically independent rational invariants. One of the main purposes of the present paper is to fill this gap: we isolate an explicit condition, which we call ``Condition IC'', under which the method of integral curves can be turned into a rigorous and constructive procedure for computing \(\bbF(L)^L\), and we show that this condition is satisfied whenever \(L\) is nilpotent.

In the algebra literature, by contrast, the kernel of a derivation is more commonly computed by means of the \emph{Dixmier map}, a Taylor-series type construction associated with a slice of a locally nilpotent derivation~\cite[Section~1.1.9]{Fre17}. We show that, for triangular derivations, these two traditions are in fact two sides of the same coin: the generic integral curve of a triangular derivation, evaluated at the time at which it meets a fixed hyperplane transverse to the flow, produces exactly the same generators as the Dixmier map associated with the corresponding slice (Theorem~\ref{th:dix}). This reconciles the analytic, curve-theoretic point of view favoured in the physics literature with the algebraic, derivation-theoretic point of view favoured in the algebra literature, and it shows that neither method is more general than the other in this setting.

Building on this correspondence, we develop an algorithm to compute algebraically independent generators of \(\bbF(L)^L\) for an arbitrary finite-dimensional nilpotent Lie algebra \(L\). The key observation is that a nilpotent Lie algebra admits a triangular basis \(x_1,\ldots,x_n\), adapted to the upper central series of \(L\), with respect to which each of the derivations \(\ad(x_1),\ldots,\ad(x_n)\) becomes triangular once restricted to the field of invariants of its predecessors. Applying the method of integral curves successively along this sequence of triangular derivations yields explicit algebraically independent generators of \(\bbF(L)^L\), exhibits it as a purely transcendental extension of \(\bbF\), and recovers the Beltrametti--Blasi formula~\cite{BB66} for the transcendence degree of \(\bbF(L)^L\) in terms of the rank of the multiplication matrix of \(L\). We have implemented the resulting algorithm in the computational algebra system SageMath and used it to compute invariants of nilpotent Lie algebras of dimension up to 6, and of the algebra of strictly upper triangular matrices in higher dimensions.

The paper is organised as follows. In Section~\ref{metodoCaract} we introduce the algebraic version of the method of integral curves for a derivation \(D\) of \(\frpolx\) with polynomial coefficients, formulate Condition~IC, and show that it allows one to identify \(\frpolx^D\) with the field generated by the pullback of coordinate functions along a suitable dominant rational map. In Section~\ref{sec:triang} we introduce triangular derivations, show that they satisfy Condition~IC by means of a generic integral curve, and prove that the resulting generators coincide with those produced by the Dixmier map (Theorem~\ref{th:dix}). Section~\ref{derivTriang_invPol} explores the connection with the algebra of polynomial invariants. In Section~\ref{sec:inv_nilp_lie} we specialise to nilpotent Lie algebras and prove our main structural result (Theorem~\ref{teor_presDerTriangAd}), which shows that a triangular basis produces a sequence of triangular derivations whose successive kernels compute \(\bbF(L)^L\). In Section~\ref{gerAlgInvarRac_consAlg} we deduce the Beltrametti--Blasi formula for the transcendence degree of \(\bbF(L)^L\). Finally, Section~\ref{sec:algorithm} describes the resulting algorithm, its implementation in SageMath, and several sample computations.

\subsection*{Acknowledgement}
The first 
author acknowledges the support of a {\em Bolsa de Produtividade em Pesquisa} (CNPq, Brazil, 309251/2023-0) and a {\em Projeto Universal (FAPEMIG, Brazil, APQ-03491-25)}. 
The second author was supported by a PhD scholarship of CAPES, Brazil. We are grateful 
to Artem Lopatin, Arturo Ulises Fernandez Perez, Elizaveta Vishnyakova, José Luis Vilca Rodríguez,  Renato Vidal da Silva Martins for their thoughtful comments. 
We are also grateful to 
Travis Scrimshaw for his help with the SageMath implementation.

\section{Kernels of derivations and integral curves}
\label{metodoCaract}
%%%%%%%%%%%%%%%%%%%%%%%%%%%%%%%%%
%
%       MÉTODO DAS CARACTERÍSTICAS - HIPÓTESES DE APLICAÇÃO
%
%%%%%%%%%%%%%%%%%%%%%%%%%%%%%%%%%

Suppose that $\bbF$ is a field of characteristic zero, which need not be algebraically closed.
Consider the vector space $\bbF^n$ with a fixed basis, and let $x_1,\ldots,x_n$ denote the corresponding coordinate functions
$\bbF^n\to \bbF$. In this way, $\polx$ denotes the algebra of polynomial functions and $\frpolx$ the field of rational functions on $\bbF^n$. For $i\in\{1,\ldots,n\}$, let $D_{x_i}$ denote the 
partial derivative with respect to $x_i$, and let
\[ 
D = \sum_{i=1}^n f_i D_{x_i}
\] 
be a derivation of $\frpolx$, where $f_i \in \polx$ for all $i$. Since we are interested in $\ker D$, we assume that the coefficients of $D$ are polynomial functions. However, one could more generally assume that the coefficients are rational functions. 

Our goal in this section is to describe the kernel of $D$. To achieve this,
we formulate an algebraic version of the method of integral curves, 
which is well known in the analytic theory of partial differential equations. In the analytic setting, the method of 
integral curves covers an open region $U$ with integral curves of the vector field $\bm{F} = (f_1, \ldots, f_n)$ 
and seeks functions $f$ that are constant along each of these integral curves. 
Such functions $f$ are solutions of the equation $D(f) = 0$ (see \cite[Section 3.2]{Eva10}). 

In our algebraic context, this method must be applied with sufficient care, as the integral curves are often not 
algebraic curves, and thus the resulting functions are not necessarily rational. 
To address this, we present an algebraic interpretation of the method that works under certain geometric conditions. 
These conditions will be shown to hold for triangular derivations, as demonstrated in Section~\ref{sec:triang}.

\subsection{Rational and birational maps} 
\label{metodoCaract_ratBirMaps} 
Topological notions for subsets of $\bbF^n$ are understood in the Zariski topology. 
For a polynomial $f$, we let $\calnV(f)$ denote the Zariski closed zero set of $f$.
Suppose that $X \subseteq \bbF^n$ and $Y \subseteq \bbF^m$ are closed, irreducible subsets. 
A {\em rational map} $f: X \dashrightarrow Y$ is a morphism defined on a Zariski open subset $U \subseteq X$ with values in $Y$. 
Rational maps are denoted by dashed arrows. The components of such a rational map are functions of the form $p/q$, 
where $p, q \in \polx$ are polynomials and $q$ does not vanish anywhere on $U$. 

A rational map $f: X \dashrightarrow Y$ is said to be {\em dominant} if $\mbox{Im}(f)$ contains a dense open subset of $Y$. (This is deliberately stronger than assuming that $\mbox{Im}(f)$ is dense in $Y$.)
Let $f: X \dashrightarrow Y$ and $g: Y \dashrightarrow Z$ be rational maps. If 
$\mbox{dom}(f) \cap f^{-1}(\mbox{dom}(g)) \neq \emptyset$ (which happens, for example, when $f$ is dominant), 
then this intersection is a non-empty Zariski open subset of $X$, and the composition $g \circ f: X \dashrightarrow Z$ is defined 
and is itself a rational map. In this case, we say that $f$ and $g$ are {\em composable}. 

A map $f: X \dashrightarrow Y$ is said to be {\em birational} if there exists a rational inverse $g: Y \dashrightarrow X$ such that 
there are Zariski open subsets $U \subseteq X$ and $V \subseteq Y$ satisfying $f \circ g = \mbox{id}_V$ and $g \circ f = \mbox{id}_U$. 
In this case, we say that $X$ and $Y$ are {\em birationally equivalent}. Note that in this case 
$\mbox{Im}(f)$ contains the dense open set $V$, and one may take $V=\mbox{dom}(g)$.

A {\em rational curve} $C$ is a closed, irreducible subset of $\bbF^n$ that is birationally equivalent to $\bbF$. 
A birational equivalence $\bfc: \bbF \dashrightarrow C$ is called a rational parametrization of $C$.

\subsection{Lie derivatives of rational functions}\label{metodoCaract_lieDerRat} 
Suppose that $f_1,\ldots,f_n\in\polx$. The functions $f_i$ determine a vector field $\bm F$ on $\bbF^n$ defined by 
\[ 
{\bm F}({\bm p})=(f_1({\bm p}),\ldots,f_n({\bm p}))
\] 
for all ${\bm p}\in \bbF^n$. With the vector field $\bm F$, there is an associated derivation $D_{{\bm F}}\in\Der_{\bbF}\big(\frpolx\big)$; namely, 
\[ 
D_{{\bm F}} = \sum_{i=1}^{n}f_{i}D_{x_{i}}.
\]   
Suppose that $C\subseteq \bbF^n$ is a rational curve. We say that 
%$C$ is an {\em integral curve} of $\bm F$ if, 
%at every point $\bm p\in C$, the tangent line $T_{\bm p}C$ of $C$ is parallel to the vector $\bm F(\bm p)$. 
%More concretely, 
$C$ is an integral curve for the vector field $\bm F$ if whenever 
$\bfc:\bbF\dashrightarrow C$ is a birational
equivalence (rational parametrization),
%such that 
%\[ 
%\bfc(t)=(c_1(t),\ldots,c_n(t))
%\] 
%with $c_i\in\bbF(t)$, 
then, letting \(D_{t}\) be the derivative with respect to \(t\), 
there is a rational function $\lambda_{\bfc}:\mbox{dom}(\bfc)\to \bbF$ such that   
\begin{equation}\label{eq:int_curve}
        D_{t}(\bfc) = \lambda_{\bfc}{\bm F}\circ \bfc.
\end{equation}
That is, the tangent vector of the parametrization $\bfc$ at a point $\bm p\in C$
is a scalar multiple of the vector of the vector field ${\bm F}$ at $\bm p$.  

The Lie derivative $L_{\bm F}$ acts on rational functions $f\in\frpolx$ as 
\[ 
L_{\bm F}\cdot f=D_{{\bm F}}(f).
\] 
The kernel of the derivation $D_{{\bm F}}$ consists of those rational functions $f\in\frpolx$ such that 
\[ 
L_{\bm F}\cdot f=D_{\bm F}(f)=0.
\]  
As is common in invariant theory, the kernel of a derivation $D_{{\bm F}}$ will be denoted by $\frpolx^{D_{{\bm F}}}$. If we restrict a 
derivation $D_{{\bm F}}$ to polynomial functions, then the kernel of the restriction will be denoted by $\polx^{D_{\bm F}}$. 
One may verify by simple calculation that the functions in $\frpolx^{D_{\bm F}}$ form a field 
and $\polx^{D_{\bm F}}$ is an $\bbF$-algebra inside $\frpolx^{D_{\bm F}}$. 
In the theory of differential equations, a function $f$ that 
is annihilated by the vector field ${\bm F}$ is often called \textit{a first integral} of ${\bm F}$ (see for example~\cite[Section~2.10.5]{Arn06}). In this section we investigate the problem of 
obtaining a generating system for the field of rational first integrals of ${\bm F}$.

\begin{example} \label{exem_campVetFx}
    Consider the derivation
    \[
        D = x_{1}D_{x_{2}} + x_{2}D_{x_{3}} \in \Der_{\bbF}\big(\bbF(x_{1},x_{2},x_{3})\big).
    \]
    Note that $D$ is associated with a vector field $\bm F$ that relates the point \(\bm{p} = (p_{1}, p_{2}, p_{3}) \in \bbF^{3}\) to the vector \((0, p_{1}, p_{2}) \in T_{\bm{p}}\bbF^{3}\), as shown in the figure below.

    \vspace{5pt}
    \begin{center}
        \begin{minipage}{.45\textwidth}%
            \begin{flushleft}
                \begin{tikzpicture}[scale=0.35]
                    \draw node at (10.5,0,0) {\(x_{1}\)};
                    \draw node at (0,6.75,0) {\(x_{2}\)};
                    \draw node at (0,0,10.2) {\(x_{3}\)};
                    \draw [thick] (0,0,0) -- (3.6,0,0);
                    \draw [thick] (3.6,0,0) -- (7.5,0,0);
                    \draw [thick, ->] (7.5,0,0) -- (9.75,0,0);
                    \draw [thick, ->] (0,0,0) -- (0,6,0);
                    \draw [thick, ->] (0,0,0) -- (0,0,9);
                    \draw [red, dashed] (7.5,0,0) -- (7.5,6.75,0) -- (7.5,6.75,10.2) -- (7.5,0,10.2) -- (7.5,0,0);
                    \draw node at (7.5, 2.85, 3.5) {\(\bullet\)};
                    \draw node at (7.5, 1.9, 3) {\(\bm{p}\)};
                    \draw [thick, ->, blue] (7.5, 2.85, 3.5) -- (7.5, 5.1, 4.355);
                \end{tikzpicture}
            \end{flushleft}
        \end{minipage}%
        \begin{minipage}{.45\textwidth}%
            \begin{flushleft}
                \begin{tikzpicture}[scale=0.35]
                    \draw node at (10.5,0,0) {\(x_{1}\)};
                    \draw node at (0,6.75,0) {\(x_{2}\)};
                    \draw node at (0,0,10.2) {\(x_{3}\)};
                    \draw [thick] (0,0,0) -- (3.6,0,0);
                    \draw [thick] (3.6,0,0) -- (7.5,0,0);
                    \draw [thick, ->] (7.5,0,0) -- (9.75,0,0);
                    \draw [thick, ->] (0,0,0) -- (0,6,0);
                    \draw [thick, ->] (0,0,0) -- (0,0,9);
                    \draw [red, dashed] (7.5,0,0) -- (7.5,6.75,0) -- (7.5,6.75,10.2) -- (7.5,0,10.2) -- (7.5,0,0);
                    \pgfmathsetmacro{\s}{0.3}
                    \foreach \i in {7.5}
                        \foreach \j in {0, 1.425, 2.85, 4.275, 5.7}
                            \foreach \k in {0, 3.5, 7, 10.5}
                                \draw[->, color=blue, line width=1pt] 
                                    (\i, \j, \k) -- (\i+ \s*0, \j + \s*\i, \k  + \s*\j);
                \end{tikzpicture}
            \end{flushleft}
        \end{minipage}%
    \end{center}
    \vspace{5pt}

    \noindent
    To find parametrized integral curves of \(\bm F\), consider \(\bfc = (c_{1}, c_{2}, c_{3}) \in \bbF(t)^{3}\) that satisfies the following system
    of ordinary differential equations:
    \[
        \begin{array}{ccccc}
            D_{t}(c_{1}) \, = \, 0, &&
            D_{t}(c_{2}) \, = \, c_{1}, &&
            D_{t}(c_{3}) \, = \, c_{2}.
        \end{array}
    \]
    By integration, we obtain as a solution to this system the family of parametrized rational curves \((\bfc_{\bm{p}})_{\bm{p} \in \bbF^{3}}\), where
    \[
        \bfc_{\bm{p}} = (c_{\bm{p}1}, c_{\bm{p}2}, c_{\bm{p}3}) = \left(p_{1},\; p_{1}t + p_{2},\; p_{1}\dfrac{t^{2}}{2} + p_{2}t + p_{3}\right).
    \]
    \begin{center}
        \begin{tikzpicture}[scale=0.35]
            \draw node at (10.5,0,0) {\(x_{1}\)};
            \draw node at (0,6.75,0) {\(x_{2}\)};
            \draw node at (0,0,10.2) {\(x_{3}\)};
            \draw [thick] (0,0,0) -- (3.6,0,0);
            \draw [thick] (3.6,0,0) -- (7.5,0,0);
            \draw [thick, ->] (7.5,0,0) -- (9.75,0,0);
            \draw [thick, ->] (0,0,0) -- (0,6,0);
            \draw [thick, ->] (0,0,0) -- (0,0,9);
            \draw [red, dashed] (7.5,0,0) -- (7.5,6.75,0) -- (7.5,6.75,10.2) -- (7.5,0,10.2) -- (7.5,0,0);
            \draw [thick, blue] plot [smooth] coordinates {(7.5, 0, 3.5) (7.5, 2.25, 3.5) (7.5, 4.5, 4.175) (7.5, 6.75, 5.525)};
            \draw [thick, blue] plot [smooth] coordinates {(7.5, 0, 7) (7.5, 2.25, 7) (7.5, 4.5, 7.675) (7.5, 6.75, 9.025)};
            \draw [thick, blue] plot [smooth] coordinates {(7.5, 0, 0) (7.5, 2.25, 0) (7.5, 4.5, 0.675) (7.5, 6.75, 2.025)};
            \draw node at (7.5, 2.85, 3.5) {\(\bullet\)};
            \draw node at (7.8, 1.9, 2.8) {\(\bm{p}\)};
            \draw node at (6.9, 7.2, 6) {\(\bm{c}_{\bm{p}}\)};
        \end{tikzpicture}
    \end{center}
    Note that the resulting parametrized integral curves are full curves, that is, they are maps $\bbF\to\bbF^3$ and the family 
    of curves is parametrized by the triple $(p_1,p_2,p_3)\in\bbF^3$. 
    It is easy to verify that if 
    ${\bm q}\in \mbox{Im}(\bfc_{\bm p})$ for some ${\bm p},{\bm q}\in\bbF^3$, then $\bfc_{\bm p}(\bbF)=\bfc_{\bm q}(\bbF)$.  

    Let us parametrize the integral curves $\bfc$ so that $c_2(0)=0$; that is, $p_2=0$. This way, we have a family of integral curves 
    parametrized by $\gamma=(y_1,y_2)\in\bbF^2$ such that 
    \[ 
    \bfc_{\gamma}(t)=\left(y_1,y_1t,y_1\frac{t^2}2+y_2\right).
    \] 
    Thus, for each $\gamma=(y_1,y_2)\in\bbF^2$, we have that $C_\gamma=\mbox{Im}(\bfc_\gamma)$ is an integral curve for the vector field $\bm F$. 
    This also gives us a map   
    \[ 
     \psi:\bbF^3\to\bbF^3, \quad (y_1,y_2,t)\mapsto \left(y_1,y_1t,y_1\frac{t^2}2+y_2\right)
    \]
    with $\mbox{dom}(\psi)=\bbF^3$. Note that $\psi$ maps a triple $(y_1,y_2,t)$ to the point $\bfc_\gamma(t)$. 
    Its rational inverse is  
    \[ 
     \varphi:\bbF^3\dashrightarrow\bbF^3,\quad    (x_1,x_2,x_3)\mapsto (y_1,y_2,t)=\left(x_1,x_3-\frac{x_2^2}{2x_1},\frac{x_2}{x_1}\right)
    \] 
    with $U=\mbox{dom}(\varphi)=\bbF^3\setminus\mathcaln V(x_1)$. We have that $\psi\circ\varphi=\mbox{id}_U$ and $\varphi\circ\psi = \mbox{id}_{\bbF^3\setminus\mathcaln V(y_1)}$. Let 
    $\pi:\bbF^3\to \bbF^2$ denote the projection map to the first two coordinates and let $\chi=\pi\circ\varphi:U\to \bbF^2$ (also considered as 
    a rational map $\bbF^3\dashrightarrow\bbF^2$). For a given $\gamma=(y_1,y_2)\in\chi(U)$, 
    the fiber $\chi_\gamma=\chi^{-1}(\gamma)\subseteq U$ is the integral curve $C_\gamma$. Thus the open set $U$ is foliated by 
    the integral curves $C_\gamma$ of the vector field $\bm F$. 
    
    In this example, each integral curve $C_\gamma$ has a rational 
    parametrization $\bfc_\gamma:\bbF\dashrightarrow C_\gamma$ such that $D_t\bfc_\gamma=\bm F\circ \bfc_\gamma$; that is, 
    the tangent vector of $\bfc_\gamma$ at a point $\bm p\in U$ coincides with the vector $\bm F(\bm p)$ of the field. 
    That is, the function $\lambda_{\bfc}$ in 
    equation \eqref{eq:int_curve} is the constant function $\lambda_{\bfc}=1$.
\end{example}

\begin{example}\label{ex:expfunc}
    Let us now consider the case $\bbF=\bbC$ and the derivation 
    \[ 
    D=x_1D_{x_1}+x_2D_{x_2}.
    \] 
    To determine the integral curves of the associated vector field $\bm F=(x_1,x_2)$, we consider the system of ordinary differential equations 
    \begin{equation}\label{eq:system2}
    D_t(c_1)=c_1\quad\mbox{and}\quad D_t(c_2)=c_2.        
    \end{equation}
    Its solutions are of the form 
    \[ 
    \bfc=(c_1,c_2),\quad c_1(t)=y_1\exp (t)\quad \mbox{and}\quad c_2(t)=y_2\exp (t)
    \]
    with constants $y_1,y_2\in\bbC$. 
    We may choose $y_2=1$ and this way our integral curves satisfy $c_2(0)=1$. This way, we obtain a family of integral 
    curves 
    $\bfc_y(t)=(y\exp(t),\exp(t))$ for each $y\in \bbC$. The image $C_y$ of $\bfc_y$ is the line 
    \[ 
    C_y=\{(ys,s)\mid s\in\bbC\setminus\{0\}\}
    \]
    with the point $(0,0)$ removed. Hence its closure 
    \[ 
    \overline C_y=\{(ys,s)\mid s\in\bbC\}
    \] 
    is an integral curve for the vector field $\bm F$. In fact, we may take the parametrization $s\mapsto (ys,s)$ for  
    $s\neq 0$ and $\lambda(s)=1/s$ to satisfy equation~\eqref{eq:int_curve}. 
    Let $U=\bbC^2\setminus\mathcaln V(x_2)$ and consider the map 
    \[ 
    \varphi:U\to \bbC^2,\quad (x_1,x_2)\mapsto (x_1/x_2,x_2).
    \] 
    Then $\varphi$ can be considered as a birational map $\bbC^2\dashrightarrow \bbC^2$. Let $\pi:\bbC^2\to \bbC$ be the 
    projection to the first coordinate and set $\chi=\pi\circ\varphi:\bbC^2\dashrightarrow\bbC$. Then for a given $y\in \chi(U)=\bbC$, we have that the 
    fiber $\chi_y=\chi^{-1}(y)$ is equal to $C_y=\overline C_y\cap U$, where $\overline C_y$ is the 
    closed integral curve as above. Thus in this case also, the open set $U$ 
    is foliated by integral curves, despite the fact that the solutions of the equations \eqref{eq:system2} do not have rational solutions. 
\end{example}
A notable property of the integral curves of a derivation \(D \in \derfrpolx\) manifests itself when evaluating a function belonging to the kernel of \(D\) along one of these curves, as shown in Lemma \ref{prop_curvasCarValConst}.
\begin{lemma} \label{prop_curvasCarValConst}
    Use the notation and assumptions introduced in Section~\ref{metodoCaract_lieDerRat}.
        If \(f \in \frpolx^{D_{{\bm F}}}\) and \(C\subseteq \bbF^n\) is an integral curve of \(\bm F\) such that $\mbox{dom}(f)\cap C\neq \emptyset$, then 
        \[
                f\big|_{\mbox{dom}(f)\cap C}\in \bbF.
        \]
        That is, \(f\) is constant along the integral curves.
\end{lemma}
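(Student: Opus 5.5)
The plan is to pull $f$ back along a rational parametrization of $C$ and apply the chain rule, using the integral-curve equation~\eqref{eq:int_curve}. Fix a rational parametrization $\bfc:\bbF\dashrightarrow C$, which exists because $C$ is a rational curve. Let $\lambda_{\bfc}$ be the rational function provided by the definition of an integral curve, so that $D_t(\bfc)=\lambda_{\bfc}\,\bm F\circ\bfc$. Write $f=p/q$ with $p,q\in\polx$. Since $\mbox{dom}(f)\cap C$ is a nonempty open subset of the irreducible curve $C$, and $\bfc$ is birational onto $C$, the composite $g=f\circ\bfc$ is a well-defined element of $\bbF(t)$. Explicitly, $g=p(\bfc)/q(\bfc)$, and $q(\bfc)$ is not the zero element of $\bbF(t)$.

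The key step is the formal chain rule in $\bbF(t)$. For a polynomial $h\in\polx$ and $\bfc=(c_1,\ldots,c_n)\in\bbF(t)^n$ we have
\[
D_t\big(h(\bfc)\big)=\sum_{i=1}^n (D_{x_i}h)(\bfc)\,D_t(c_i).
\]
This holds because both sides are derivations in $h$ composed with the substitution homomorphism $\polx\to\bbF(t)$, and they agree on each $x_i$. Via the quotient rule, the identity extends to $h\in\frpolx$ whenever $h(\bfc)$ is defined. Applying it to $f$ and substituting $D_t(c_i)=\lambda_{\bfc}\,f_i(\bfc)$ gives
\[
D_t(g)=\lambda_{\bfc}\sum_{i=1}^n f_i(\bfc)\,(D_{x_i}f)(\bfc)=\lambda_{\bfc}\,\big(D_{\bm F}(f)\big)(\bfc)=0.
\]
Since $\operatorname{char}\bbF=0$, the kernel of $D_t$ on $\bbF(t)$ is $\bbF$. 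To see this, write $g=a/b$ in lowest terms with $b$ monic. Then $a'b=ab'$, and comparing degrees or using coprimality forces $a'=b'=0$, hence $a,b\in\bbF$. It follows that $g=\kappa\in\bbF$.

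Finally, we transfer the constancy from $g$ back to $f$ on $C$. The set $\{s\in\mbox{dom}(\bfc): \bfc(s)\in\mbox{dom}(f)\}$ is a nonempty open subset of $\bbF$ on which $f(\bfc(s))=\kappa$. Its image under $\bfc$ contains a dense open subset $W$ of $\mbox{dom}(f)\cap C$, by birationality. The function $f-\kappa$ is regular on $\mbox{dom}(f)\cap C$ and vanishes on the dense subset $W$. Its zero set is closed in $\mbox{dom}(f)\cap C$, so it vanishes on all of $\mbox{dom}(f)\cap C$, which gives $f|_{\mbox{dom}(f)\cap C}=\kappa$.

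I expect the main obstacle to be this last step, especially when $\bbF$ is not algebraically closed. There one has to argue carefully that $\mbox{dom}(f)\cap C$ is irreducible, being open in $C$, and that $\bfc(\mbox{dom}(\bfc))\cap\mbox{dom}(f)$ is dense in it. The derivative computation itself is routine.
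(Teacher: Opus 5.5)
Your proposal is correct and follows essentially the same route as the paper: pull $f$ back along a rational parametrization $\bfc$, use the chain rule together with $D_t(\bfc)=\lambda_{\bfc}\,\bm F\circ\bfc$ to get $D_t(f\circ\bfc)=0$, and then extend constancy from a dense open subset of $C$ to all of $\mbox{dom}(f)\cap C$. You also make explicit two points the paper leaves implicit: that $\ker D_t=\bbF$ on $\bbF(t)$ in characteristic zero, and the density argument via the birational inverse of $\bfc$.
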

\begin{proof}
Considering that $D_{{\bm F}}=\sum_{i=1}^n f_iD_{x_i}$, if \(f \in \frpolx^{D_{{\bm F}}}\), then 
\[ 
0=D_{{\bm F}}(f)=\sum_{i=1}^n f_iD_{x_i}(f).
\] 
Suppose that $\bfc:\bbF\dashrightarrow C$ is a birational equivalence and $\lambda_{\bfc}:\mbox{dom}(\bfc)\to \bbF$ is a 
function that satisfies equation~\eqref{eq:int_curve}. Let $C_0$ be a dense open subset of $C$ contained in 
$\mbox{Im}(\bfc)\cap \mbox{dom}(f)$; it suffices to show that $f\big|_{C_0}$ is constant.  
Applying the chain rule for the multivariable derivative, we also obtain that 
        \[
                D_{t}\left(f\circ \bfc\right) = \sum_{i=1}^{n}(D_{x_{i}}(f)\circ \bfc)\cdot D_{t}(c_{i})=\lambda_{\bfc}\sum_{i=1}^{n}(D_{x_{i}}(f)\circ\bfc)\cdot(f_i\circ\bfc)=\lambda_{\bfc} D_{\bm F}(f)\circ \bfc =0.
        \]
        Thus, \(D_{t}\left(f\circ \bfc\right) = 0\) and we conclude that \(f\circ\bfc\) is constant on $C_0$. 
        That is, $f$ is a constant function on a dense 
        open subset of $C$ which means that $f$ is a constant function on $\mbox{dom}(f)\cap C$. 
        %\hfill\(\boxtimes\)
\end{proof}

\subsection{The hypotheses of applying the method of integral curves}
\label{metodoCaract_hipoAplic} 
As mentioned at the beginning of this section, our goal is to describe \(\frpolx^{D}\) for a 
derivation $D\in\derfrpolx$ with polynomial coefficients by constructing algebraically independent generators 
for $\frpolx^D$. Using the notation introduced in Section~\ref{metodoCaract_lieDerRat}, we now present a hypothesis that allows us to achieve this goal. %
\begin{quotation}{\bf Condition IC.}
    Suppose that there exists a non-empty Zariski open set $U\subseteq \bbF^n$ and a dominant rational map
    \[ 
    \chi:\bbF^n\dashrightarrow \bbF^{n-1}
    \]  
    such that 
    \begin{enumerate}
        \item $U= \mbox{dom}(\chi)$;
        \item for all $\gamma=(\gamma_1,\ldots,\gamma_{n-1})\in\chi(U)$,
    the fiber $C_\gamma=\chi^{-1}(\gamma)\cap U$ is a dense open subset of a rational integral 
    curve for the vector field associated with the derivation $D$.
    \end{enumerate}
    \end{quotation}

%\begin{enumerate}[label={(H\arabic*)}]
%    \item \label{IC_H1} $U\subseteq \bbF^n$ is a Zariski open set and for every \(\bm{p} \in U\), and for each $\bm p\in U$ there is a unique integral curve 
%    $\bfc_{\bm p}\subseteq U$ such that $\bm p\in\bfc_{\bm p}$ (uniqueness refers to the image of the curve);
    %
%    \item \label{IC_H2} there exists an irreducible affine variety \(\Gamma \subseteq \bbF^{n}\), such that \(\Gamma \cap U \neq \varnothing\) and, for every \(\bm{p} \in U\), the intersection \(\bfc_{\bm{p}} \cap \Gamma\) contains exactly one element, denoted by \(\bm{p}_{0}\);
    %
%    \item \label{IC_H3} each component of the map
%    \begin{equation} \label{eq_intersCurvaGamma}
%        \begin{array}{rcl}
%            \varphi: \; U & \to & \Gamma \\
%            \bm{p} & \mapsto & \bm{p}_{0}
%        \end{array}
%    \end{equation}
%    is a rational function of \(\bbF^{n}\).
%\end{enumerate}

Suppose that $\chi:\bbF^n\dashrightarrow \bbF^{n-1}$ is as in Condition IC. 
If $\bm p\in U$ and $\chi(\bm p)=\gamma$, then $\bm p$ lies on the fiber $C_\gamma=\chi^{-1}(\gamma)\cap U$, which is a dense open subset of a rational integral curve $C$.
That is, one can say that the Zariski open set $U$ is foliated by the integral curves $C_{\gamma}$ with 
$\gamma\in\chi(U)$ as illustrated in the following image.
    \begin{center}
        \begin{tikzpicture}
            \draw[thick] plot [smooth] coordinates {(-5,0) (-3,3) (-1,0) (-4,-2.5) (-5,0)};
            \draw node at (-4.5,2) {\small\(U\)};
            \draw[magenta] (-4,-2.2) to [out=50, in=150] (-1.8,-0.7);
            \draw[magenta] (-4,-1.7) to [out=50, in=150] (-1.4,-0.2);
            \draw[line width=1.5pt, magenta] (-4.3,-1.2) to [out=50, in=150] (-1.4,0.3);
            \draw[magenta] (-4.3,-0.7) to [out=50, in=150] (-1.4,0.8);
            \draw[magenta] (-4.4,-0.2) to [out=50, in=150] (-2,1.3);
            \draw[magenta] (-4.4,0.3) to [out=50, in=150] (-2,1.8);
            \draw[magenta] (-4,1.3) to [out=50, in=150] (-2.5,2.3);
            \draw node at (-3,0.1) {\small\(\bullet\)};
            \draw node at (-6,-1) {\small\(\bm{p} = \bfc(t_{\bm{p}})\)};
            \draw node at (-2,0.7) {\small\(\bfc\)};
            \draw[->] (-6,-0.5) to [out=50, in=150] (-3.1,0.2);
       \end{tikzpicture}
    \end{center}
    \begin{lemma}\label{lem:icurves}
        Assume that Condition IC above is valid and let 
        $f\in\frpolx$. 
        Then $D(f)=0$ if and only if $f\big|_{\mbox{dom}(f)\cap C_\gamma}$ is constant for each fiber $C_\gamma$ 
        such that $\mbox{dom}(f)\cap C_\gamma\neq \emptyset$.   
    \end{lemma}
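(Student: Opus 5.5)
The forward implication is immediate from Lemma~\ref{prop_curvasCarValConst}. If $D(f)=0$ and $C_\gamma$ is a fiber with $\mbox{dom}(f)\cap C_\gamma\neq\emptyset$, then by Condition~IC the set $C_\gamma$ is a dense open subset of a rational integral curve $C$. Hence $\mbox{dom}(f)\cap C\neq\emptyset$. Lemma~\ref{prop_curvasCarValConst} then shows that $f$ is constant on $\mbox{dom}(f)\cap C$, and in particular on $\mbox{dom}(f)\cap C_\gamma$.

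For the converse, the plan is to run the chain-rule computation from Lemma~\ref{prop_curvasCarValConst} backwards. Assume $f$ is constant on every fiber that meets its domain, and set $g=D(f)\in\frpolx$. We want $g=0$; it suffices to show that $g$ vanishes on a dense open subset of $\bbF^n$. Consider the open set
\[
W=U\cap\mbox{dom}(f)\cap\mbox{dom}(g)\cap\{\bm F\neq 0\},
\]
assuming for the moment that $\bm F\neq 0$; if $\bm F=0$ then $D=0$ and there is nothing to prove. Take $\bm p\in W$ and put $\gamma=\chi(\bm p)$. Let $\bfc:\bbF\dashrightarrow C$ be a rational parametrization of the curve $C\supseteq C_\gamma$, and choose $t_0$ with $\bfc(t_0)=\bm p$. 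Since $\bfc$ is birational and $C_\gamma\cap\mbox{dom}(f)$ is open dense in $C$, such a $t_0$ exists for generic $\bm p$; if necessary, shrink $W$ to the points in the image of the dense open set on which $\bfc$ is an isomorphism.

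On a neighbourhood of $t_0$ the function $f\circ\bfc$ is constant, so $D_t(f\circ\bfc)=0$ as a rational function of $t$. By the chain rule and \eqref{eq:int_curve},
\[
0=D_t(f\circ\bfc)=\lambda_{\bfc}\,(g\circ\bfc).
\]
The function $\lambda_{\bfc}$ is not identically zero, since otherwise $D_t\bfc=0$ and $\bfc$ would be constant, contradicting $\bfc$ being birational onto a curve. Hence $g\circ\bfc=0$ as a rational function, so $g$ vanishes on a dense subset of $C$ and in particular at $\bm p$, at least for generic $\bm p$. Thus $g$ vanishes on a dense subset of $W$, and therefore $g=0$.

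The main obstacle is the ``generic point'' bookkeeping in the Zariski topology over a field that need not be algebraically closed. For each fiber we have to make sure that the point $\bm p$ is reached by the parametrization and that $f\circ\bfc$, $g\circ\bfc$ and $\lambda_{\bfc}$ are all defined there. The cleanest way to handle this is to argue curve by curve rather than point by point. For each $\gamma\in\chi(W)$, the argument above gives $g\circ\bfc=0$ as an element of $\bbF(t)$, hence $g=0$ on a dense open subset of $C_\gamma$. Since $g$ is continuous on its domain and the closed set $\calnV$ of its numerator contains a dense subset of $C_\gamma\cap\mbox{dom}(g)$, we get $g=0$ on all of $C_\gamma\cap W$. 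Because the fibers $C_\gamma$ with $\gamma\in\chi(W)$ cover $W$, it follows that $g=0$ on $W$, which is nonempty and open. A rational function vanishing on a nonempty Zariski open subset of $\bbF^n$ ($\bbF$ infinite) is zero, so $D(f)=0$.
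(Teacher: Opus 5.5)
Your proof is correct and follows the same route as the paper. The forward direction comes from Lemma~\ref{prop_curvasCarValConst}, and the converse runs the chain-rule identity $0=D_t(f\circ\bfc)=\lambda_{\bfc}\,(D(f)\circ\bfc)$ on each fiber, uses $\lambda_{\bfc}\not\equiv 0$, and then covers a nonempty open set by fibers; your curve-by-curve bookkeeping on $W$, with the passage to all of $C_\gamma\cap W$ via the zero set of the numerator of $D(f)$, only spells out a step the paper states briefly.
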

    \begin{proof}
        If $D(f)=0$, then Lemma~\ref{prop_curvasCarValConst} shows that $f\big|_{\mbox{dom}(f)\cap C}$ is constant for 
        each integral curve $C$ of $D$ 
        such that $\mbox{dom}(f)\cap C\neq \emptyset$
        and hence 
        is constant on $C_\gamma$ for all $\gamma\in\chi(U)$ such that $\mbox{dom}(f)\cap C_\gamma\neq \emptyset$. 
        Assume, conversely, that $f\big|_{\mbox{dom}(f)\cap C_\gamma}$ is constant for each fiber $C_\gamma$ 
        such that $\mbox{dom}(f)\cap C_\gamma\neq \emptyset$. 
        Suppose that $C_\gamma$ is one of these fibers and is a dense open subset of the integral curve $C$. Let $\bfc:\bbF\dashrightarrow C$ be a rational parametrization and let 
        $\lambda_{\bfc}:\mbox{dom}(\bfc)\to \bbF$ be a rational function such that 
        \(D_{t}(c_{j}) = \lambda_{\bfc} f_{j}\circ \bfc\), for all \(j = 1, \dots, n\).
        Since the restriction of $f$ to the set $C_\gamma\cap \mbox{dom}(f)$ is constant, and this set is dense and open in 
        $C$, we obtain that $f\circ\bfc$ is constant on a dense open subset of $\mbox{dom}(\bfc)$, and hence \(D_{t}(f\circ \bfc) = 0\). We obtain, as in the proof of Lemma~\ref{prop_curvasCarValConst}, that
        \begin{equation} \label{eq_isoFGammaKerD_dtComp}
            0 = D_{t}\left(f\circ \bfc\right) = 
            \lambda_{\bfc} D(f)\circ \bfc.
            %\sum_{j=1}^{n}\left(D_{x_{j}}(f)\circ \bfc\right)\cdot D_{t}(c_{j}).
        \end{equation}
        %
%        Recalling that \(D = \sum_{i=1}^{n}f_{i}D_{x_{i}}\) and using that \(C\) is an integral curve of \(D\), we obtain that there is a rational function 
 %       $\lambda:\mbox{dom}(\bfc)\to \bbF$ such that 
%        \(D_{t}(c_{j}) = \lambda f_{j}\circ \bfc\), for all \(j = 1, \dots, n\). Thus, returning to Equation \eqref{eq_isoFGammaKerD_dtComp}, we have
%        %
%        \[
%            0=\sum_{j=1}^{n}\left(D_{x_{j}}(f)\circ \bfc\right)\cdot D_{t}(c_{j}) = \lambda\sum_{j=1}^{n}\left(f_{j}\circ \bfc\right) \left(D_{x_{j}}(f)\circ \bfc\right) = 
%            \lambda D(f)\circ \bfc;
%        \]
        %
        Since $\bfc$ is birational, it is non-constant, so $\lambda_{\bfc}$ is not identically zero. Therefore, on the non-empty open subset of $\mbox{dom}(\bfc)$ where $\lambda_{\bfc}\neq 0$, we have \(D(f)\circ \bfc = 0\). Hence $D(f)$ vanishes on a dense open subset of $C$, and therefore $D(f)|_C=0$. 
        By Condition IC and the argument following it, every point in \(U\) belongs to an integral curve, so  
        \(D(f)(\bm{p}) = 0\), for all \(\bm{p} \in U\cap \mbox{dom}(f)\). Again, since \(U\cap \mbox{dom}(f)\) is a non-empty Zariski open set of $\bbF^n$, 
        we obtain that \(D(f) = 0\).
    \end{proof}

%%%%%%%%%%%%%%%%%%%%%%%%%%%%%%%%%
%
%       METHOD OF CHARACTERISTICS - OBTAINING ALG. IND. GENERATORS
%
%%%%%%%%%%%%%%%%%%%%%%%%%%%%%%%%%

\subsection{Obtaining generators}
\label{metodoCaract_obtGerAlgInd}

%
%In this section, we will consider a derivation \(D \in \derfrpolx\) that satisfies the hypotheses of the previous section and we will construct an isomorphism between \(\frpolx^{D}\) and a finitely generated field, as presented in Proposition \ref{prop_isoFGammaKerD}. For this, we will make a brief observation.
%

%
%Let \(\bbF(\Gamma)\) be the field of rational functions of \(\Gamma\). Since \(\Gamma\) is an affine variety isomorphic to \(\bbF^{n-1}\), \(\bbF(\Gamma)\) can be seen as \(\bbF(y_{1}, \dots, y_{n-1})\). Let \(f_{0} \in \bbF(\Gamma)\), \(\den(f_{0})\) be the denominator of \(f_{0}\), and
%
%\[
%    \calnV\big(\den(f_{0})\big) = \{\bm{p} \in \Gamma \mid \den(f_{0})(\bm{p}) = 0\} \subseteq \Gamma
%\]
%
%the affine algebraic set associated with \(\den(f_{0})\). Also consider the map \(\varphi: U \to \Gamma\), as defined in Equation \eqref{eq_intersCurvaGamma}. Given \(\bm{p} \in U\setminus\varphi^{-1}\Big(\calnV\big(\den(f_{0})\big)\Big)\), we can consider \(f_{0}\big(\varphi(\bm{p})\big) \in \bbF\).
%
%
Suppose that Condition IC above is valid. 
Then $\chi:\bbF^n\dashrightarrow \bbF^{n-1}$ is a dominant rational map and  
induces an algebra homomorphism $\chi^*:\bbF(\bfx_{n-1})\to \frpolx$ between the fields of rational functions where $\chi^*$ is the pull-back. That is, if 
$f_0\in\bbF(\bfx_{n-1})$, then 
\[ 
\chi^*(f_0)=f_0\circ \chi\in\frpolx.
\]

%For an algebraic variety $\Gamma$, and a rational function $f_0\in\bbF(\Gamma)$ let us denote by $\den(f_{0})$ the denominator of 
%$f_0$ and by $\calnV\big(\den(f_{0})\big)$ the zero set of $\den(f_0)$. 

\begin{proposition} \label{prop_isoFGammaKerD}
    If Condition IC above is valid, then $\chi^*:\bbF(\bfx_{n-1})\to \frpolx^D$ is 
    an injective homomorphism of fields. Furthermore, if there exists a rational map $\psi:\bbF^{n-1}\dashrightarrow \bbF^n$ such that 
    $\chi\circ\psi$ is the identity map on a dense open subset $V\subseteq \bbF^{n-1}$ and 
    for all $f\in \frpolx^D$, we have that $\mbox{dom}(f)\cap \mbox{Im}(\psi)\neq \emptyset$, then $\chi^*$ is an isomorphism.
\end{proposition}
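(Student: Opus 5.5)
The argument has three steps. First, $\chi^*$ is a well-defined injective field homomorphism. Second, its image lies in $\frpolx^D$, which follows from Lemma~\ref{lem:icurves}. Third, under the extra hypothesis on $\psi$, every invariant is the pullback of the rational function $f\circ\psi$.

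\emph{Well-definedness and injectivity.} Since $\chi$ is dominant, $\mbox{Im}(\chi)$ contains a dense open subset of $\bbF^{n-1}$. Hence for $f_0\in\bbF(\bfx_{n-1})$ the set $\mbox{dom}(\chi)\cap\chi^{-1}(\mbox{dom}(f_0))$ is non-empty, so $f_0\circ\chi$ is a rational function on $\bbF^n$. Pullback preserves sums and products, so $\chi^*$ is a ring homomorphism between fields. It is therefore automatically injective, since its kernel is an ideal of a field and $1\mapsto 1$. One can also see this directly: if $f_0\circ\chi=0$, then $f_0$ vanishes on the dense set $\chi(U)\cap\mbox{dom}(f_0)$, so $f_0=0$.

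\emph{Image lies in the kernel.} Let $f=\chi^*(f_0)$. For any fiber $C_\gamma=\chi^{-1}(\gamma)\cap U$ with $\mbox{dom}(f)\cap C_\gamma\neq\emptyset$, every point $\bm p$ of $\mbox{dom}(f)\cap C_\gamma$ satisfies $f(\bm p)=f_0(\chi(\bm p))=f_0(\gamma)$. So $f$ is constant on each such fiber. Strictly, one must check that $\gamma\in\mbox{dom}(f_0)$ whenever $f$ is defined somewhere on $C_\gamma$. I would handle this by noting that on the open set $U\cap\chi^{-1}(\mbox{dom}(f_0))$ the formula holds, and that constancy on a dense open piece of the irreducible curve suffices, as in the proof of Lemma~\ref{prop_curvasCarValConst}. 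Lemma~\ref{lem:icurves} then gives $D(f)=0$.

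\emph{Surjectivity.} Given $f\in\frpolx^D$, the hypothesis $\mbox{dom}(f)\cap\mbox{Im}(\psi)\neq\emptyset$ makes $f$ and $\psi$ composable. So $f_0:=f\circ\psi\in\bbF(\bfx_{n-1})$ is defined. I would show $\chi^*(f_0)=f$ on a dense open set. Take $\bm p\in U\cap\mbox{dom}(f)$ with $\gamma=\chi(\bm p)\in V\cap\mbox{dom}(f_0)$; such points form a non-empty open set because $\chi$ is dominant and $V$ is dense open. Then $\psi(\gamma)$ satisfies $\chi(\psi(\gamma))=\gamma$, so $\psi(\gamma)$ lies in the same fiber $C_\gamma$ as $\bm p$, and we may assume it lies in $\mbox{dom}(f)$. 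By Lemma~\ref{lem:icurves}, $f$ is constant on $\mbox{dom}(f)\cap C_\gamma$. Hence
\[
f(\bm p)=f(\psi(\gamma))=f_0(\chi(\bm p)).
\]
Since $f$ and $\chi^*(f_0)$ agree on a non-empty open set, they are equal.

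\emph{Main obstacle.} The hard part is the genericity bookkeeping in this last step. We need a non-empty open set of $\bm p$ for which $\psi(\chi(\bm p))$ is both defined and lies in $\mbox{dom}(f)$; the hypothesis only guarantees that $\mbox{dom}(f)$ meets $\mbox{Im}(\psi)$. The set $W=\psi^{-1}(\mbox{dom}(f))\cap V\cap\mbox{dom}(\psi)$ is open in $\bbF^{n-1}$ and non-empty by hypothesis. Dominance of $\chi$ then makes $\chi^{-1}(W)\cap U\cap\mbox{dom}(f)$ a non-empty open subset of $\bbF^n$, and this is the set on which I would run the argument.
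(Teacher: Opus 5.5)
Your proposal is correct and follows the paper's proof step for step. Injectivity holds because a nonzero homomorphism out of a field is injective. Containment in $\frpolx^D$ follows from Lemma~\ref{lem:icurves}, since $\chi^*(f_0)$ is constant on fibers. Surjectivity comes from setting $f_0=f\circ\psi$ and comparing $f(\bm p)$ with $f(\psi(\chi(\bm p)))$ on a common fiber.

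Your open set $W=\psi^{-1}(\mbox{dom}(f))\cap V\cap\mbox{dom}(\psi)$ is more careful than the paper, which never checks that $\psi(\chi(\bm p))\in\mbox{dom}(f)$. For the kernel step, though, the clean justification is different from the one you give. The converse argument of Lemma~\ref{lem:icurves} only needs $f$ to be constant on the fibers over $\chi(U)\cap\mbox{dom}(f_0)$, and their union $U\cap\chi^{-1}(\mbox{dom}(f_0))$ is a non-empty open set. You do not need constancy on a dense piece of every fiber.
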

\begin{proof}
    The fact that $\chi^*$ is a homomorphism follows from the functorial correspondence between algebraic sets and their 
    fields of rational functions. Since $\bbF(\bfx_{n-1})$ is a field, the homomorphism $\chi^*$ is either zero or injective; 
    however, $\chi^*$ is clearly non-zero, and hence it must be injective.  
    Let $U$ be the open set of $\bbF^n$ as in Condition IC and suppose that $C_\gamma\subseteq U$ is the fiber for some 
    $\gamma\in\chi(U)$. 
    If $f_0\in\bbF(\bfx_{n-1})$, then 
    $\chi^*(f_0)(C_\gamma)=(f_0\circ \chi)(C_\gamma)=f_0(\gamma)$ and so $\chi^*(f_0)$ 
    restricted to $C_\gamma$ is constant. Since this is true for all integral curves 
    $C_\gamma$, Lemma~\ref{lem:icurves} shows that 
    $D(\chi^*(f_0))=0$, and hence $\chi^*(f_0)\in \frpolx^D$. 
    Suppose now that the additional condition of the proposition (the existence of the rational map $\psi$) 
    is also true and let us prove that $\chi^*:\bbF(\bfx_{n-1})\to \frpolx^D$ is surjective in this case.
    Let $f\in\frpolx$ such that $D(f)=0$. 
    By the additional conditions in the proposition, $f$ and $\psi$ are composable, and so taking $f_0=f\circ \psi$, $f_0$ is a rational function. Then, by Lemma~\ref{lem:icurves}, 
    $f$ is constant 
    on each integral curve $C_\gamma$. Consider the following diagram:
    \[
    \begin{tikzcd}
	\bbF^n  &&\bbF^{n-1} \\
	\bbF
    \arrow["\chi", dashed, bend left=10, from=1-1, to=1-3]
    \arrow["\psi", dashed, bend left=10, from=1-3, to=1-1]
	\arrow["{f_0}", dashed, from=1-3, to=2-1]
    \arrow["f"', dashed, from=1-1, to=2-1]
    \end{tikzcd}
\]
Since $\chi\circ\psi$ is the identity on the dense open set $V$, for $\bm p\in \mbox{dom}(f)\cap 
\chi^{-1}(V\cap\mbox{dom}(f_0))$, the point $\psi(\chi(\bm p))$ 
lies on the same fiber $C_{\chi(\bm p)}$ as $\bm p$; as $f$ is constant on each such fiber, we get $f(\psi(\chi(\bm p)))=f(\bm p)$. Hence the diagram 
above commutes; that is 
\[ 
f=f_0\circ\chi=\chi^*(f_0).
\] 
This shows that $\mbox{Im}(\chi^*)=\frpolx^D$.
\end{proof}

\begin{corollary} \label{coro_isoFGammaKerD}
    Assume that the hypotheses of Proposition \ref{prop_isoFGammaKerD} hold, including those of its ``Furthermore'' 
    part, so that $\chi^*:\bbF(\bfx_{n-1})\to\frpolx^D$ is an isomorphism. Then the elements    %
    \[
        \chi^*(x_{1})=x_1\circ\chi, \dots, \chi^*(x_{n-1})=x_{n-1}\circ\chi
    \]
    are algebraically independent generators of $\frpolx^D$. In particular, 
    $\frpolx^D$ is a purely transcendental extension of $\bbF$ of transcendence degree $n-1$. 
\end{corollary}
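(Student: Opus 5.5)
The plan is to transport the obvious generators of the domain across the isomorphism $\chi^*$. The ``Furthermore'' part of Proposition~\ref{prop_isoFGammaKerD} gives us that $\chi^*:\bbF(\bfx_{n-1})\to\frpolx^D$ is an isomorphism of fields. Since $\chi^*$ is the pull-back along a rational map, it restricts to the identity on the constants, so it is in fact an $\bbF$-isomorphism. The corollary then reduces to facts about the rational function field $\bbF(\bfx_{n-1})=\bbF(x_1,\ldots,x_{n-1})$ itself, which we pull back through this $\bbF$-isomorphism.

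First, we check generation. The field $\bbF(\bfx_{n-1})$ is generated over $\bbF$ by $x_1,\ldots,x_{n-1}$, meaning every element is a quotient $p(x_1,\ldots,x_{n-1})/q(x_1,\ldots,x_{n-1})$ of polynomials over $\bbF$ with $q\neq 0$. Because $\chi^*$ is a field homomorphism fixing $\bbF$, we have
\[
\chi^*(p/q)=p(\chi^*(x_1),\ldots,\chi^*(x_{n-1}))/q(\chi^*(x_1),\ldots,\chi^*(x_{n-1})).
\]
Surjectivity of $\chi^*$ then gives $\frpolx^D=\bbF(\chi^*(x_1),\ldots,\chi^*(x_{n-1}))$.

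Next, we check algebraic independence. Suppose $P\in\bbF[X_1,\ldots,X_{n-1}]$ satisfies $P(\chi^*(x_1),\ldots,\chi^*(x_{n-1}))=0$. Then $\chi^*(P(x_1,\ldots,x_{n-1}))=0$, and injectivity of $\chi^*$ gives $P(x_1,\ldots,x_{n-1})=0$ in $\bbF(\bfx_{n-1})$. Since $x_1,\ldots,x_{n-1}$ are algebraically independent, $P=0$.

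Together these two steps show that $\frpolx^D$ is generated over $\bbF$ by $n-1$ algebraically independent elements. Hence it is purely transcendental over $\bbF$ of transcendence degree $n-1$.

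The only point needing care is that $\chi^*$ fixes $\bbF$ elementwise. This holds because a constant function composed with $\chi$ is the same constant. With that observed, the argument is a direct transport of structure, and I do not expect any real obstacle.
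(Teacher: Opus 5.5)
Your proposal is correct and matches the paper's intended argument; the paper states this corollary without a written proof, as an immediate consequence of Proposition~\ref{prop_isoFGammaKerD}, and transporting the algebraically independent generators $x_1,\ldots,x_{n-1}$ of $\bbF(\bfx_{n-1})$ through the $\bbF$-isomorphism $\chi^*$ is exactly that consequence. Your explicit check that $\chi^*$ fixes $\bbF$ pointwise is the one detail worth recording, and you handle it correctly.
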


\begin{example} \label{exem_campVetFx2}
    Let us return to Examples~\ref{exem_campVetFx} and~\ref{ex:expfunc}. 
    In Example \ref{exem_campVetFx}, we considered the derivation 
    \[ 
    D = x_{1}D_{x_{2}} + x_{2}D_{x_{3}} \in \Der_{\bbF}\big(\bbF(\bm{x}_{3})\big).
    \]  
Now  $\chi:\bbF^3\dashrightarrow\bbF^2$ is the map 
\[ 
(x_1,x_2,x_3)\mapsto \left(x_1,x_3-\frac{x_2^2}{2x_1}\right). 
\] 
If $\psi:\bbF^2\to \bbF^3$, defined by $(y_1,y_2)\mapsto (y_1,0,y_2)$, then 
$\psi$ and $\chi$ satisfy the conditions of Proposition~\ref{prop_isoFGammaKerD}.
Thus, by Corollary~\ref{coro_isoFGammaKerD}, the elements
\[ 
\chi^*(x_1)=x_1\circ\chi=x_1\quad\mbox{and}\quad \chi^*(x_2)=x_2\circ\chi=x_3-\frac{x_2^2}{2x_1}
\] 
are algebraically independent generators of $\frpolx^D$.

In Example~\ref{ex:expfunc}, the derivation is 
\[ 
D=x_1D_{x_1}+x_2D_{x_2}.
\]
Here the function $\chi:\bbC^2\dashrightarrow\bbC$ is given by $(x_1,x_2)\mapsto x_1/x_2$
and $\psi:\bbF\to \bbF^2$ by $y\mapsto(y,1)$. Thus, by Corollary~\ref{coro_isoFGammaKerD}, $\frpolx^D$ is generated by 
\[ 
\chi^*(x_1)=x_1\circ\chi=x_1/x_2.
\] 
\end{example}

\section{Computing kernels of triangular derivations}\label{sec:triang}

%%%%%%%%%%%%%%%%%%%%%%%%%%%%%%%%%
%
%       TRIANGULAR DERIVATIONS
%
%%%%%%%%%%%%%%%%%%%%%%%%%%%%%%%%%

%
Triangular derivations of \(\frpolx\) form a special class of derivations that allow the application of the method of 
integral curves, presented in Section~\ref{metodoCaract}, 
enabling the construction of an algebraically independent set that generates their kernels. 
In our context, these derivations naturally arise when dealing with finite-dimensional nilpotent Lie algebras. 
For these algebras, it is possible to determine a basis that leads to a finite sequence of triangular derivations. 
The successive application of the method of integral curves to this sequence allows the construction of an algebraically 
independent set that generates the field of rational invariants. Therefore, in this section, we will explore this class of derivations in greater detail.
%

%%%%%%%%%%%%%%%%%%%%%%%%%%%%%%%%%
%
%       TRIANGULAR DERIVATIONS - CONTEXTUALIZATION AND DEFINITION
%
%%%%%%%%%%%%%%%%%%%%%%%%%%%%%%%%%

\subsection{Locally nilpotent and triangular derivations}
\label{derivTriang_contexDef}
Let $D$ be a derivation of an $\bbF$-algebra \(A\). We say that \(D\) is a {\em locally nilpotent derivation}, 
if for each \(a \in A\) there exists \(k \geq 1\) such that \(D^{k}(a) = 0\). 

%Locally nilpotent derivations play an important role in the response to  \textit{14th Hilbert's problem}, which raises the question:
%
%\begin{center}
%    if \(\bbK\) is a field such that \(\bbF \subseteq \bbK \subseteq \frpolx\), then is \(\bbK \cap \polx\) a finitely generated \(\bbF\)-algebra?
%\end{center}
%
%In 1990, Robert, in \cite{Rob90}, presented an example that negates this Hilbert's problem. This example, as shown by A'Campo-Neuen, 
%in \cite{ACN25}, is indeed the kernel of a locally nilpotent derivation.
%
%
%
%
%
Let \(\{a_{1}, \dots, a_{n}\} \subseteq A\) be an algebraically independent set and \(\bbF[\bm{a}_{n}]\) the subalgebra of \(A\) generated by 
\(\{a_{1}, \dots, a_{n}\}\). We say that a derivation \(D\) of the algebra \(\bbF[\bm{a}_{n}]\) is a {\em triangular derivation} with respect 
to the set \(\{a_{1}, \dots, a_{n}\}\), if \(D(a_{1}) \in \bbF\) and \(D(a_{i}) \in \bbF[a_{1}, \dots, a_{i-1}]\), for every \(i = 2, \dots, n\). 
A derivation \(D \in \Der_{\bbF}\big(\bbF[\bm{a}_{n}]\big)\) is a \textit{triangularizable derivation} if there exists an algebraically 
independent set \(\{a_{1}', \dots, a_{n}'\}\subseteq \bbF[\bm{a}_{n}]\) that also generates \(\bbF[\bm{a}_{n}]\) such that \(D\) is triangular with respect to the set \(\{a_{1}', \dots, a_{n}'\}\).
If \(A\) is a domain, we extend this definition to \(\Frac(A)\), 
saying that \(D \in \Der_{\bbF}\big(\Frac(A)\big)\) is a \textit{triangular derivation} with respect to the set \(\{a_{1}, \dots, a_{n}\}\), 
if \(D(a_{1}) \in \bbF\) and, for every \(i = 2, \dots, n\), we have that \(D(a_{i}) \in \bbF[a_{1}, \dots, a_{i-1}]\). 

%where \(D_{z_{i}} \in \derfrpolz\) is a derivation of the field \(\frpolx\) and \(D(z_{i})\) belongs to the polynomial algebra \(\polx\), satisfying \(D(z_{1}) \in \bbF\) and \(D(z_{i}) \in \bbF[z_{1}, \dots, z_{i-1}]\).
%

%
Proposition 3.29 of \cite{Fre17} shows that every triangularizable derivation is locally nilpotent, however, the 
converse is not true; see for example~\cite[Example~4.3]{df98}.
Knowing whether a derivation is triangularizable is difficult. However, there are works that seek this goal, as seen in the article \cite{Dai10}, where there is a characterization of such derivations for the case of derivations in \(\bbF[x_{1}, x_{2}, x_{3}]\). Additionally, still with a perspective on Hilbert's fourteenth problem, we have, in the article \cite{DF01}, a result that shows that the kernel of every triangular derivation in \(\bbF[x_{1}, x_{2}, x_{3}, x_{4}]\) is finitely generated as an \(\bbF\)-algebra.

In the subsequent discussion, triangular derivations of $\polx$ or of $\frpolx$ are meant to be 
with respect to the generating set $x_1,\ldots,x_n$. 
\begin{lemma} \label{lema_nucPolDerTriMonLidXk}
    Let $f_k,f_{k+1},\ldots,f_n\in\polx$ be polynomials such that $f_k\neq 0$ and let 
    \[
        D = f_{k}D_{x_{k}} + \cdots + f_{n}D_{x_{n}} \in \derfrpolx
    \]
    be a triangular derivation. Consider the lexicographic monomial order with respect to the order \(x_{1} < \cdots < x_{n}\). If \(f \in \polx^{D}\), then the leading monomial \(\lm(f)\) of $f$ does not contain the variable \(x_{k}\).
\end{lemma}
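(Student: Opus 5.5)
The plan is to argue by contradiction: assume $\lm(f)$ contains $x_k$ and show that the leading term of $D(f)$ is nonzero. First record what the triangularity of $D=f_kD_{x_k}+\cdots+f_nD_{x_n}$ gives us. We have $D(x_j)=0$ for $j<k$, $f_k=D(x_k)\in\bbF[x_1,\ldots,x_{k-1}]\setminus\{0\}$, and $f_j=D(x_j)\in\bbF[x_1,\ldots,x_{j-1}]$ for $j>k$. Write $f=\sum_{\bm a}c_{\bm a}\bfx^{\bm a}$ and let $\lm(f)=\bfx^{\bm a}$ with $c_{\bm a}\neq 0$. Suppose $a_k\geq 1$. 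For a monomial $\bfx^{\bm b}$,
\[
D(\bfx^{\bm b})=\sum_{j=k}^n b_j\,\bfx^{\bm b-e_j}f_j,
\]
where $e_j$ is the $j$th unit vector. We will show that the monomial
\[
M=\bfx^{\bm a-e_k}\lm(f_k)
\]
is the leading monomial of $D(f)$, with coefficient $a_kc_{\bm a}\,\lt(f_k)/\lm(f_k)$. This coefficient is nonzero because $\mathrm{char}\,\bbF=0$. That contradicts $D(f)=0$.

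To see this, compare every other monomial appearing in $D(f)$ with $M$ in the lex order, where $x_n$ is the largest variable. There are three kinds of contributions.

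First, the contribution of $\bfx^{\bm a}$ with $j=k$, namely $a_kc_{\bm a}\bfx^{\bm a-e_k}f_k$. Its leading monomial is $M$, and all its other monomials are smaller, by multiplicativity of the monomial order.

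Second, contributions of a smaller monomial $\bfx^{\bm b}<\bfx^{\bm a}$ with $j=k$. The monomials of $\bfx^{\bm b-e_k}f_k$ are at most $\bfx^{\bm b-e_k}\lm(f_k)$. This is strictly smaller than $M$, again by multiplicativity, after multiplying $\bfx^{\bm b}<\bfx^{\bm a}$ by $\bfx^{-e_k}\lm(f_k)$; this is legitimate in the Laurent sense, or one can multiply both sides by $x_k$ to stay with monomials.

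Third, contributions of any $\bfx^{\bm b}\leq\bfx^{\bm a}$ with $j>k$. Every monomial of $\bfx^{\bm b-e_j}f_j$ agrees with $\bfx^{\bm b}$ in the exponents of $x_{j+1},\ldots,x_n$ and has exponent $b_j-1$ in $x_j$, because $f_j$ involves only $x_1,\ldots,x_{j-1}$. Meanwhile $M$ agrees with $\bfx^{\bm a}$ in the exponents of $x_j,\ldots,x_n$, since $j>k$ and $\lm(f_k)$ involves only variables below $x_k$. Scan exponents from $x_n$ downward. If $\bm b$ and $\bm a$ first differ at a position above $j$, then $\bm b$ is smaller there, and so is our monomial. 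Otherwise, at position $j$ our monomial has exponent $b_j-1<b_j\leq a_j$, which is the exponent of $M$. In either case our monomial is smaller than $M$.

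The main care needed is exactly this last comparison. One must use both that $f_j$ lies in $\bbF[x_1,\ldots,x_{j-1}]$ and that $\lm(f_k)$ lies in $\bbF[x_1,\ldots,x_{k-1}]$, so that the exponents of $M$ at positions $\geq j$ coincide with those of $\lm(f)$. Once this is checked, no other term of $D(f)$ can cancel the coefficient of $M$. Hence $D(f)\neq 0$, and therefore $\lm(f)$ cannot contain $x_k$.
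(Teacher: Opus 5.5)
Your argument is correct, but it takes a different route from the paper.

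\textbf{The paper's route.} It inducts on the largest variable occurring in $f$.
\begin{itemize}
\item Base case, $f\in\bbF[x_1,\ldots,x_k]$: write $f=\sum_i d_i x_k^i$ with $d_i\in\bbF[x_1,\ldots,x_{k-1}]$, so each $d_i$ is killed by $D$. The top coefficient $\ell d_\ell f_k$ of $D(f)$ must then vanish, which forces $\ell=0$.
\item Inductive step, $f\in\bbF[x_1,\ldots,x_{i+1}]$ with $i\ge k$: write $f=\sum_j d_j x_{i+1}^j$. Since $D$ preserves $\bbF[x_1,\ldots,x_i]$ and $f_{i+1}\in\bbF[x_1,\ldots,x_i]$, the coefficient of $x_{i+1}^\ell$ in $D(f)$ is $D(d_\ell)$, so $D(d_\ell)=0$. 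As $\lm(f)=x_{i+1}^\ell\lm(d_\ell)$, the induction hypothesis applied to $d_\ell$ finishes the proof.
\end{itemize}

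\textbf{Your route.} You make a single global leading-term computation, identifying $\lm(D(f))=\lm(f)\lm(f_k)/x_k$ whenever $x_k\mid\lm(f)$. Your three-case comparison is exactly what is needed, and it checks out. In particular, two points carry the argument: $M$ agrees with $\bm a$ in every position $>k$, and $b_j\le a_j$ holds once $\bm b$ and $\bm a$ agree in all positions above $j$.

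\textbf{What each buys.} Your route gives a stronger, explicit statement that holds for every $f\in\polx$, not only for elements of the kernel, and it needs no induction. The cost is the monomial bookkeeping across all indices $j$ at once. The paper's route only ever examines the top coefficient in a single variable, so the bookkeeping is trivial. Its one structural input is that the leading coefficient, in the highest variable, of an invariant is again an invariant.
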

\begin{proof}
    Suppose that \(f \in \polx^{D}\) and let us assume, first, that \(f \in \bbF[x_{1}, \dots, x_{k}]\). Write \(f = x_{k}^{\ell}d_{\ell} + \cdots + x_{k}d_{1} + d_{0}\), where \(d_{i} \in \bbF[x_{1}, \dots, x_{k-1}]\), for every \(i = 0, \dots, \ell\). If \(\ell = 0\), then \(f = d_{0}\), so \(\lm(f)\) does not contain \(x_{k}\). Suppose that \(\ell \neq 0\). Note that \(D(d_{i}) = 0\), for every \(i = 0, \dots, \ell\), because \(d_{i} \in \bbF[x_{1}, \dots, x_{k-1}]\) and \(D(x_{i}) = 0\) for $i\in\{1,\ldots,k-1\}$. Thus,
    \begin{align} \label{eq_nucPolDerTriMonLidXk_1}
        0 &= D(f) = D(x_{k}^{\ell}d_{\ell} + x_{k}^{\ell-1}d_{\ell-1} + \cdots + x_{k}^{2}d_{2} + x_{k}d_{1} + d_{0}) = \notag \\[3pt]
        &= \ell x_{k}^{\ell-1}d_{\ell}D(x_{k}) + (\ell-1)x_{k}^{\ell-2}d_{\ell-1}D(x_{k}) + \cdots + 2x_{k}d_{2}D(x_{k}) + d_{1}D(x_{k}) = \notag \\[3pt]
        &= \ell x_{k}^{\ell-1}d_{\ell}f_{k} + (\ell - 1)x_{k}^{\ell-2}d_{\ell-1}f_{k} + \cdots + 2d_{2}x_{k}f_{k} + d_{1}f_{k}.
    \end{align}
    Analysing the coefficient of \(x_{k}^{\ell-1}\), we have that \(\ell d_{\ell}f_{k} = 0\), which is a contradiction, because \(\ell\), \(d_{\ell}\) and \(f_{k}\) are all non-zero. Therefore, \(\ell = 0\), that is, if \(f \in \polx^{D}\) and \(f \in \bbF[x_{1}, \dots, x_{k}]\), then \(\lm(f)\) does not contain \(x_{k}\).
    Let \(i \in \{k, \dots, n-1\}\) and assume, by induction, that if \(f \in \polx^{D}\) and \(f \in \bbF[x_{1}, \dots, x_{i}]\), then \(\lm(f)\) does not contain \(x_{k}\). Take \(f \in \polx^{D}\) such that \(f \in \bbF[x_{1}, \dots, x_{i+1}]\). Write \(f = x_{i+1}^{\ell}d_{\ell} + \cdots + x_{i+1}d_{1} + d_{0}\), where \(d_{j} \in \bbF[x_{1}, \dots, x_{i}]\) for \(j = 0, \dots, \ell\). Again, if \(\ell = 0\), then \(f = d_{0}\) and, by induction, \(\lm(f)\) does not contain \(x_{k}\). Suppose that \(\ell \neq 0\). Note that, in this case,
    \begin{equation} \label{eq_nucPolDerTriMonLidXk_2}
        \lm(f) = x_{i+1}^{\ell}\lm(d_{\ell}).
    \end{equation}
    Applying \(D\) to \(f\), as we did in Equation \eqref{eq_nucPolDerTriMonLidXk_1}, we get
    \[
        \begin{split}
            0 &= D(f) = D(x_{i+1}^{\ell}d_{\ell} + x_{i+1}^{\ell-1}d_{\ell-1} + \cdots + x_{i+1}^{2}d_{2} + x_{i+1}d_{1} + d_{0}) = \\
            &= \ell x_{i+1}^{\ell-1}f_{i+1}d_{\ell} + x_{i+1}^{\ell}D(d_{\ell}) + (\ell-1)x_{i+1}^{\ell-2}f_{i+1}d_{\ell-1} + x_{i+1}^{\ell-1}D(d_{\ell-1}) +\cdots + \\
            &  + 2x_{i+1}f_{i+1}d_{2} + x_{i+1}^{2}D(d_{2}) + f_{i+1}d_{1} + x_{i+1}D(d_{1}) + D(d_{0}),
        \end{split}
    \]
    which gives us
    \[
        D(d_{\ell})x_{i+1}^{\ell} + \big(\ell f_{i+1}d_{\ell} + D(d_{\ell-1})\big)x_{i+1}^{\ell-1} + \cdots + \big(2f_{i+1}d_{2} + D(d_{1})\big)x_{i+1} + f_{i+1}d_{1} + D(d_{0}) = 0.
    \]
    From this, it follows that \(D(d_{\ell}) = 0\). By induction, \(\lm(d_{\ell})\) does not contain \(x_{k}\). Therefore, by Equation \eqref{eq_nucPolDerTriMonLidXk_2}, the same holds for \(\lm(f)\).
    %\hfill\(\boxtimes\)
    \end{proof}

%%%%%%%%%%%%%%%%%%%%%%%%%%%%%%%%%
%
%       DERIVAÇÕES TRIANGULARES - APLICAÇÃO DO MÉTODO DAS CARACTERÍSTICAS
%
%%%%%%%%%%%%%%%%%%%%%%%%%%%%%%%%%
\subsection{Application of the method of integral curves}
\label{derivTriang_aplMetCar}

Our interest in triangular derivations is motivated by determining the algebraically independent generators of the rational invariant field \(\bbF(L)^{L}\) of a nilpotent Lie algebra \(L\), 
as we will see in Section \ref{sec:inv_nilp_lie}. 
We will show that, despite the difficulty of identifying a triangular derivation, when we establish an appropriate basis 
for the nilpotent Lie algebra, all the derivations necessary to determine the algebraically independent generators of \(\bbF(L)^{L}\) will be 
triangular derivations with respect to certain sets.

Suppose, in this section, that $f_k,f_{k+1},\ldots,f_n\in\bbF[x_1,\ldots,x_n]$ are polynomials such that $f_k\neq 0$ and let
    \[
        D = f_{k}D_{x_{k}} + \cdots + f_{n}D_{x_{n}} \in \derfrpolx
    \]
    be a triangular derivation. In particular, $f_i\in\bbF[x_1,\ldots,x_{i-1}]$ for all $i$. 
    We set $f_1,\ldots,f_{k-1}=0$. 
    Let $\bm F$ denote the corresponding vector field.  
Suppose that $\bfc=(c_1,\ldots,c_n):\bbF \dashrightarrow \bbF^n$ is a rational parametrization of an integral curve for $\bm F$. Then solutions $\bfc$ for the following 
system of ODEs are integral curves: 
\[ 
D_tc_i=f_i(c_1,\ldots,c_{i-1})\quad\mbox{for}\quad i=1,\ldots,n.
\] 
Let  $\int f\,d\!t$ denote the unique indefinite integral with constant term zero. 
Since each $f_i$ is a polynomial function in the variables $x_{1},\ldots,x_{i-1}$, we may define 
\begin{equation}\label{eq:genic1} 
C_1=x_1,\ldots,C_{k-1}=x_{k-1},\quad C_k=\int f_k d\!t + x_k = f_kt+x_{k}
\end{equation}
and, for $i=k+1,\ldots,n$, recursively, 
\begin{equation}\label{eq:genic2}
C_{i}=\int f_{i}(C_1,\ldots,C_{i-1})\,d\!t +x_{i}.
\end{equation}
We have that $C_i\in\bbF[x_1,\ldots,x_n][t]$. Set 
\begin{equation}\label{eq:genic}
\bfc=(C_1,\ldots,C_n).
\end{equation} 
We call $\bfc$ the {\em generic integral curve} for the vector field $\bm F$. 
If we take
$\bfp\in\bbF^n$ and substitute $\bfp$ in $\bfc$, then we obtain an {\em actual integral curve} 
$\bfc(\bfp)$ such that 
$\bfc(\bfp)(0)=\bfp$. 

The following lemma is routine to verify.

\begin{lemma}\label{lem:ic_facts} 
    Suppose that $\bfc=(C_1,\ldots,C_n)\in\bbF[x_1,\ldots,x_n,t]^n$ is as defined in the 
    previous paragraph and let $U=\bbF^n\setminus \calnV(f_k)$. 
\begin{enumerate}
    \item[(i)] If $\bfp\in\bbF^n$, then $\bfp$ is the unique point $\bfx$ in $\bbF^n$ 
    such that $\bfc(\bfx)(0)=\bfp$. 
    \item[(ii)] For $\bfp\in U$, $\bfc(\bfp)$ meets the hyperplane $\calnV(x_k)$ at $t=-p_k/f_k(p_1,\ldots,p_{k-1})$.
    \item[(iii)] If $\bfq,\bfp\in\bbF^n$ and $t_\bfq\in \bbF$, such that $\bfc(\bfp)(t_{\bfq})=\bfq$, then 
    $\bfc(\bfq)=\bfc(\bfp)(t+t_\bfq)$. 
    \item[(iv)] If $\bfp\in U$, then $\bfy=\bfc(\bfp)(-p_k/f_k(p_1,\ldots,p_{k-1}))$ is the unique 
    point $\bfy\in\calnV(x_k)$ such that $\bfp$ lies on $\bfc(\bfy)$. In fact $\bfc(\bfy)(p_k/f_k(p_1,\ldots,p_{k-1}))=\bfp$. 
\end{enumerate}
\end{lemma}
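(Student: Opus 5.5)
The plan is to first record the structural facts about the generic integral curve $\bfc=(C_1,\ldots,C_n)$ that follow directly from the recursion \eqref{eq:genic1}--\eqref{eq:genic2}, and then derive (i)--(iv) from them. Fact one: $C_i(\bfx)(0)=x_i$ for all $i$, because each indefinite integral has zero constant term. Fact two: $C_i$ depends only on $x_1,\ldots,x_i$ and $t$. Fact three: $C_1,\ldots,C_{k-1}$ are independent of $t$ and $C_k=x_k+f_k(x_1,\ldots,x_{k-1})t$. This uses $f_k\in\bbF[x_1,\ldots,x_{k-1}]$ together with $C_j=x_j$ for $j<k$. Fact four: $D_tC_i=f_i(C_1,\ldots,C_{i-1})$, so each $\bfc(\bfp)$ is a solution of the ODE system.

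Item (i) is immediate from fact one: $\bfc(\bfx)(0)=\bfx$, so $\bfc(\bfx)(0)=\bfp$ forces $\bfx=\bfp$. For (ii), fact three gives $C_k(\bfp)(t)=p_k+f_k(p_1,\ldots,p_{k-1})t$. For $\bfp\in U$ this vanishes exactly at $t=-p_k/f_k(p_1,\ldots,p_{k-1})$.

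The main point is (iii), a uniqueness statement for the polynomial ODE. Set $\bfd(t)=\bfc(\bfp)(t+t_\bfq)$. Translation commutes with $D_t$, so $\bfd$ satisfies $D_td_i=f_i(d_1,\ldots,d_{i-1})$ with $\bfd(0)=\bfq$. I will show by induction on $i$ that any polynomial solution $(d_1,\ldots,d_n)\in\bbF[t]^n$ of the triangular system is determined by its value at $0$. Assuming $d_1,\ldots,d_{i-1}$ are determined, $D_td_i$ is determined as well. In characteristic zero a polynomial is fixed by its derivative together with its constant term, so $d_i=\int f_i(d_1,\ldots,d_{i-1})\,dt+d_i(0)$. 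Since $\bfc(\bfq)$ is also such a solution with initial value $\bfq$, the two coincide. Equivalently, we may argue formally: $\bfc(\bfx)(t+s)$ and $\bfc(\bfc(\bfx)(s))(t)$ agree in $\bbF[\bfx,s,t]$, and then we specialize.

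For (iv), let $\bfp\in U$ and $t_0=-p_k/f_k(p_1,\ldots,p_{k-1})$. Put $\bfy=\bfc(\bfp)(t_0)$; by (ii), $\bfy\in\calnV(x_k)$. Applying (iii) with $\bfq=\bfy$ gives $\bfc(\bfy)(t)=\bfc(\bfp)(t+t_0)$, and evaluating at $t=-t_0=p_k/f_k$ gives $\bfp$. For uniqueness, suppose $\bfy'\in\calnV(x_k)$ and $\bfc(\bfy')(s)=\bfp$. By (iii), $\bfc(\bfp)(t)=\bfc(\bfy')(t+s)$. Since $y'_j=p_j$ for $j<k$ by fact three, we get $f_k(\bfy')=f_k(\bfp)\ne0$. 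Then $0=y'_k=C_k(\bfp)(-s)$, so $-s=t_0$ by (ii). Hence $\bfy'=\bfc(\bfp)(t_0)=\bfy$.

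I expect the only real care to be needed in (iii): it must be done at the level of polynomials in $t$ and use characteristic zero, so that integration is well defined.
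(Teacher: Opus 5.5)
Your proof is correct. The paper gives no proof and only calls the lemma routine; your argument is exactly that routine check: (i) and (ii) read off the recursion \eqref{eq:genic1}--\eqref{eq:genic2}, (iii) from uniqueness of polynomial solutions of the triangular system $D_t d_i=f_i(d_1,\ldots,d_{i-1})$ with prescribed value at $0$ (the step that uses characteristic zero), and (iv) from (ii) and (iii). One small remark: in the uniqueness part of (iv), the observation $f_k(\bfy')=f_k(\bfp)\neq 0$ is not needed, since applying (ii) only requires $\bfp\in U$.
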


The statements of Lemma~\ref{lem:ic_facts} imply that triangular derivations satisfy Condition IC.
This will allow us to determine the algebraically independent generators of the kernel of a triangular derivation. 

\begin{theorem} \label{teor_derLocNilSatH1H2}
    Let $f_k,f_{k+1},\ldots,f_n\in\bbF[x_1,\ldots,x_n]$ be polynomials such that $f_k\neq 0$, let
    \[
        D = f_{k}D_{x_{k}} + \cdots + f_{n}D_{x_{n}} \in \derfrpolx
    \]
    be a triangular derivation and set $U=\bbF^n\setminus\calnV(f_k)$.  
    Let us define 
    \[ 
    \chi_0:U \to \calnV(x_k),\quad \bfp=(p_1,\ldots,p_n)\mapsto \bfc(\bfp)(-p_k/f_k(p_1,\ldots,p_{k-1})).
    \] 
    Then  $\chi_0:\bbF^n\dashrightarrow\calnV(x_k)$ is a rational map such that 
    $(\chi_0)|_{\calnV(x_k)\cap U}$ is the identity. 
    Furthermore, 
    for all $f\in \frpolx^D$, we have that $\mbox{dom}(f)\cap \calnV(x_k)\neq \emptyset$.
\end{theorem}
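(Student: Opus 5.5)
The theorem makes three claims: $\chi_0$ is rational, $\chi_0$ restricts to the identity on $\calnV(x_k)\cap U$, and every invariant has domain meeting the hyperplane. I will prove them in that order, using the explicit generic integral curve $\bfc=(C_1,\ldots,C_n)$ from \eqref{eq:genic1}–\eqref{eq:genic2} and Lemma~\ref{lem:ic_facts}.

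\emph{Rationality.} By construction each $C_i\in\bbF[x_1,\ldots,x_n][t]$. Substitute $t=-x_k/f_k(x_1,\ldots,x_{k-1})$, where $f_k\in\bbF[x_1,\ldots,x_{k-1}]$ by triangularity. Each component of $\chi_0$ then becomes a polynomial in $x_1,\ldots,x_n$ and $1/f_k$, so it lies in $\polx[1/f_k]$. Hence $\chi_0$ is a morphism on $U=\bbF^n\setminus\calnV(f_k)$, and therefore a rational map. Its $k$-th component is $C_k=f_kt+x_k$, which vanishes at this value of $t$; this is Lemma~\ref{lem:ic_facts}(ii). So the image lies in $\calnV(x_k)$.

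\emph{Identity on the hyperplane.} For $\bfp\in\calnV(x_k)\cap U$ we have $p_k=0$, so the time parameter is $0$. Then $\chi_0(\bfp)=\bfc(\bfp)(0)=\bfp$ by Lemma~\ref{lem:ic_facts}(i); this follows since every $C_i$ has constant term $x_i$ in $t$.

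\emph{Domains of invariants meet the hyperplane.} This is the step with real content. Take $f=g/h\in\frpolx^D$ in lowest terms. It suffices to show $x_k\nmid h$, because then $h$ does not vanish identically on the irreducible hyperplane $\calnV(x_k)$. I would try the following argument. From $D(g/h)=0$ we get $hD(g)=gD(h)$. Since $\gcd(g,h)=1$, $h$ divides $D(h)$; write $D(h)=\mu h$. Because $D$ is triangular, it raises no variable's degree, so $\deg D(h)\le\deg h$ in a suitable weighted sense. A cleaner route uses local nilpotence: $D$ is locally nilpotent on $\polx$ (\cite[Prop.~3.29]{Fre17}), and for locally nilpotent derivations $D(h)=\mu h$ forces $D(h)=0$ (compare degrees with respect to the degree function $\deg_D$). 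So $D(h)=0$. Now suppose $x_k\mid h$, and write $h=x_k^m h'$ with $x_k\nmid h'$ and $m\ge1$. Then $0=D(h)=mx_k^{m-1}f_kh'+x_k^mD(h')$, and hence $x_k\mid mf_kh'$. But $f_k\in\bbF[x_1,\ldots,x_{k-1}]$ is nonzero and not divisible by $x_k$, and $x_k$ is prime, which gives a contradiction. Alternatively, $D(h)=0$ together with Lemma~\ref{lema_nucPolDerTriMonLidXk} shows $\lm(h)$ avoids $x_k$, so $h\notin x_k\polx$.

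The main obstacle is justifying $D(h)=0$ from $D(h)=\mu h$. I would cite the standard fact that the kernel of a locally nilpotent derivation is factorially closed and that such a derivation has no nonzero polynomial eigenvectors with nonzero eigenvalue. If a self-contained argument is needed, use $\deg_D(D(h))=\deg_D(h)-1$ when $D(h)\ne0$. Since $\deg_D(\mu h)\ge\deg_D(h)$, this forces $D(h)=0$.
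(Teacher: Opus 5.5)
Your proof is correct, and its skeleton matches the paper's. Rationality comes from the polynomial components of $\bfc$ evaluated at $t=-x_k/f_k$, and the identity on $\calnV(x_k)\cap U$ comes from $\bfc(\bfp)(0)=\bfp$. For the last claim you also write the invariant as $g/h$ in lowest terms, deduce $h\mid D(h)$, and expand $D(x_k^m h')=m x_k^{m-1}f_kh'+x_k^mD(h')$.

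The one real difference is your intermediate step $D(h)=0$, obtained from local nilpotence via $\deg_D$ (or via Lemma~\ref{lema_nucPolDerTriMonLidXk}). The paper skips this step entirely, and you can too. Since $x_k^m\mid h\mid D(h)$, the expansion above already gives $x_k^m\mid m x_k^{m-1}f_kh'$, i.e.\ $x_k\mid m f_kh'$. The contradiction then follows exactly as in your final step, so the ``main obstacle'' you identify is self-imposed.

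The paper's divisibility argument uses only that $x_k\nmid D(x_k)$, i.e.\ that the hyperplane $\calnV(x_k)$ is not $D$-invariant. It never uses local nilpotence, so it is shorter, self-contained, and applies to a wider class of derivations. Your detour proves something stronger: the reduced denominator of a rational invariant of a locally nilpotent derivation is itself a polynomial invariant. The cost is importing degree-function facts about locally nilpotent derivations that this statement does not need.
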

\begin{proof}
    The map $\chi_0$ is clearly a rational map, since the components of $\bfc$ are polynomial functions. 
    If $\bfy\in\mbox{Im}(\chi_0)$, then $\chi_0(\bfp)=\bfy$ if and only if $\bfp\in \mbox{Im}(\bfc(\bfy))$, and 
    so the fiber $\chi_0^{-1}(\bfy)$ is the image inside $U$ of the integral curve $\bfc(\bfy)$. 
    If $\bfp\in U\cap \calnV(x_k)$, then $\chi_0(\bfp)=\bfc(\bfp)(0)=\bfp$. Thus $(\chi_0)|_{\calnV(x_k)\cap U}$ is 
    the identity map on ${\calnV(x_k)\cap U}$.

    It remains to show the last statement. Suppose that $f=g/h$ where $\mbox{gcd}(g,h)=1$ such that $D(f)=0$. 
    Suppose by contradiction that $\mbox{dom}(f)\cap \calnV(x_k)=
    \emptyset$ which amounts to assuming that $x_k\mid h$. Write $h=x_k^\alpha h_0$ with $\alpha\geq 1$ and 
    $x_k\nmid h_0$. Then 
    \[ 
    0=D(f)=D(g/h)=(D(g)h-gD(h))/h^2
    \] which gives that $D(g)h-gD(h)=0$; that is 
    $D(g)h=gD(h)$. Since $\mbox{gcd}(g,h)=1$, this implies that $h\mid D(h)$ and in particular 
    $x_k^\alpha \mid D(h)$. On the other hand 
    \[ 
    D(h)=D(x_k^\alpha h_0)=\alpha x_{k}^{\alpha-1}D(x_k)h_0+x_k^\alpha D(h_0)=\alpha x_{k}^{\alpha-1}f_kh_0+x_k^\alpha D(h_0)
    \]  
and, in particular, $x_k\mid f_k$. However, this is impossible, since $f_k\in\bbF[x_1,\ldots,x_{k-1}]$. 
This shows that $\mbox{dom}(f)\cap \calnV(x_k)\neq 
    \emptyset$, as required.
\end{proof}

\begin{corollary} \label{coro_derLocNilSatH1H2}
    Using the notation of Theorem~\ref{teor_derLocNilSatH1H2} and letting $\psi:\bbF^{n-1}\to\bbF^n$ 
    be the obvious bijection onto $\calnV(x_k)$, the map $\chi=\psi^{-1}\circ\chi_0:\bbF^n\dashrightarrow \bbF^{n-1}$ satisfies Condition IC. Furthermore, 
    $\chi\circ \psi$ is the identity map on $\psi^{-1}(U)$ and for all $f\in \frpolx^D$, we have that 
    $\mbox{dom}(f)\cap \mbox{Im}(\psi)\neq \emptyset$. Consequently,
    the components of $\bfc(-x_k/f_k)$ other than the $k$-th form an algebraically independent generating set for 
    the field $\frpolx^D$. More concretely,
    define 
    \[ 
    z_i=x_i\quad\mbox{for}\quad i=1,\ldots,k-1
    \] 
    and
    \[ 
    z_i=C_i(-x_k/f_k)\quad\mbox{for}\quad i=k+1,\ldots,n.
    \] 
    Then $z_i\in\frpolx$ for all $i$ and $z_1,\ldots,z_{k-1},z_{k+1},\ldots,z_n$
    is an algebraically independent set of generators for $\frpolx^D$.  
\end{corollary}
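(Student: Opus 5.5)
The plan is to verify Condition IC for $\chi=\psi^{-1}\circ\chi_0$ and the extra hypotheses of Proposition~\ref{prop_isoFGammaKerD}, and then read off the generators from Corollary~\ref{coro_isoFGammaKerD}. Here $\psi(y_1,\ldots,y_{n-1})=(y_1,\ldots,y_{k-1},0,y_k,\ldots,y_{n-1})$, so $\psi$ is an isomorphism $\bbF^{n-1}\to\calnV(x_k)$. Since $\chi_0$ is rational on $U=\bbF^n\setminus\calnV(f_k)$ by Theorem~\ref{teor_derLocNilSatH1H2}, the map $\chi$ is a rational map with $\mbox{dom}(\chi)=U$. The components of $\bfc(\bfp)$ are polynomials in $\bfp$ and $t$, evaluated at $t=-p_k/f_k$, and the only denominators are powers of $f_k$.

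For dominance I will use that $\chi_0$ restricted to $\calnV(x_k)\cap U$ is the identity (Theorem~\ref{teor_derLocNilSatH1H2}). Hence $\chi\circ\psi$ is the identity on $\psi^{-1}(U)$. Because $f_k\in\bbF[x_1,\ldots,x_{k-1}]$ is nonzero, $\psi^{-1}(U)=\bbF^{n-1}\setminus\calnV(f_k)$ is a dense open set, and it is contained in $\mbox{Im}(\chi)$, so $\chi$ is dominant.

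For the fibre condition, take $\gamma\in\chi(U)$ and set $\bfy=\psi(\gamma)\in\calnV(x_k)\cap U$; here $f_k(\bfy)\neq0$ because $f_k$ depends only on the first $k-1$ coordinates, which $\chi_0$ preserves. By Lemma~\ref{lem:ic_facts}(iii),(iv), a point $\bfp\in U$ satisfies $\chi(\bfp)=\gamma$ exactly when $\bfp$ lies on $\bfc(\bfy)$. Hence $C_\gamma=\mbox{Im}(\bfc(\bfy))\cap U$. Every point of the curve $\bfc(\bfy)$ has the same first $k-1$ coordinates as $\bfy$, so it lies in $U$, and thus $C_\gamma$ is the whole image.

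It remains to show that this image is a rational integral curve. The $k$-th coordinate of $\bfc(\bfy)(t)$ is $f_k(\bfy)t$, which is a nonconstant linear function of $t$. So $t\mapsto\bfc(\bfy)(t)$ is injective, with polynomial inverse $\bfx\mapsto x_k/f_k(\bfy)$ on its image. The image is therefore closed (it is the graph of polynomial functions of $x_k$, cut out by the first $k-1$ coordinates) and irreducible, and $\bfc(\bfy)$ is a birational parametrization.

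The integral-curve equation \eqref{eq:int_curve} with $\lambda=1$ holds by the recursive construction \eqref{eq:genic1}--\eqref{eq:genic2}: $D_tC_i=f_i(C_1,\ldots,C_{i-1})$. I also have to check the definition's requirement for \emph{every} parametrization. Any other birational parametrization differs from this one by an automorphism of the line, $t=a s+b$, which gives $\lambda=a$. I expect this verification that the fibre really is a full rational integral curve to be the main, if minor, obstacle.

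The last assertion of Theorem~\ref{teor_derLocNilSatH1H2} says that $\mbox{dom}(f)\cap\calnV(x_k)\neq\emptyset$ for every $f\in\frpolx^D$, and $\calnV(x_k)=\mbox{Im}(\psi)$. So Proposition~\ref{prop_isoFGammaKerD} gives the isomorphism $\chi^*$, and Corollary~\ref{coro_isoFGammaKerD} gives the generators $x_j\circ\chi$.

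Finally I compute these generators. Since $C_i=x_i$ for $i<k$, the components of $\chi_0$ are $z_i=x_i$ for $i<k$ and $C_i(-x_k/f_k)$ for $i>k$; the $k$-th component is $0$ and is the one dropped by $\psi^{-1}$. Each $z_i$ lies in $\frpolx$ because it is a polynomial in the $x_j$ and $1/f_k$.
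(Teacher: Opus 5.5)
Your proposal is correct and takes the same route as the paper. The paper's proof simply asserts in one line that Condition IC and the extra hypotheses of Proposition~\ref{prop_isoFGammaKerD} follow by composing the properties of $\chi_0$ from Theorem~\ref{teor_derLocNilSatH1H2} with $\psi$, and then reads off the components of $\chi$. You supply the omitted details: dominance via $\chi\circ\psi=\mathrm{id}$ on $\psi^{-1}(U)$, the identification of each fibre with the full image of $\bfc(\bfy)$ via Lemma~\ref{lem:ic_facts}, and closedness and rationality of that image.

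One small slip: a birational reparametrization of the line need not be affine, since $t=1/s$ is birational but not of the form $as+b$; in general it is a M\"obius map $t=\mu(s)$. The chain rule still gives $D_s(\bfc\circ\mu)=\mu'\,(\bm F\circ\bfc\circ\mu)$ with $\mu'$ rational, so the integral-curve condition holds for every parametrization and nothing breaks.
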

\begin{proof}
    Composing the properties of $\chi_0$ from Theorem~\ref{teor_derLocNilSatH1H2} with the bijection $\psi$ shows that $\chi=\psi^{-1}\circ\chi_0$ satisfies Condition IC together with the additional hypothesis of Proposition~\ref{prop_isoFGammaKerD}, so that the conditions of Corollary~\ref{coro_isoFGammaKerD} hold,
    and so we need to compute $\chi^*(x_i)=x_i\circ \chi$ for $i=1,\ldots,n-1$.
    The element $z_i=x_i\circ \chi$ is the $i$-th component of $\chi:\bbF^n\dashrightarrow\bbF^{n-1}$.
    Given that $\chi=\psi^{-1}\circ \chi_0$, we have that the $i$-th component is $C_i(-x_k/f_k)$. Hence 
    the description for the $z_i$ follows from~\eqref{eq:genic1} and~\eqref{eq:genic2}.  
\end{proof}

The next result shows that the generators obtained by the method of integral curves are the same 
as the ones provided by the Dixmier map~\cite[Section~1.1.9]{Fre17}.

\begin{theorem}\label{th:dix}
    Suppose that $z_1,\ldots,z_{k-1},z_{k+1},\ldots,z_n$ are the generators of 
    $\frpolx^D$ in Corollary~\ref{coro_derLocNilSatH1H2}. Then, for $i\neq k$, we have 
    \begin{equation}\label{eq:dix}
    z_i=\sum_{m\geq 0}\frac{ (-1)^m}{m!}D^m(x_i)\left(\frac{x_k}{f_k}\right)^m.
    \end{equation}
    The sum is finite, since $D$ is locally nilpotent.
\end{theorem}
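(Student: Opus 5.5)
The plan is to show that the generic integral curve is exactly the Taylor expansion of the flow of $D$. Concretely, I would first prove that for every $i$,
\[
C_i=\sum_{m\geq 0}\frac{t^m}{m!}D^m(x_i)\in\polx[t],
\]
where $D$ acts on the coefficients (polynomials in $x_1,\ldots,x_n$) and $t$ is a new indeterminate with $D(t)=0$. Once this identity is established, the substitution $t=-x_k/f_k$ gives~\eqref{eq:dix} at once, because $z_i=C_i(-x_k/f_k)$ by Corollary~\ref{coro_derLocNilSatH1H2}. For $i<k$ there is nothing to check: $D(x_i)=0$ there, so both sides reduce to $x_i$. Finiteness of the sum follows from local nilpotency of the triangular derivation $D$ on $\polx$ (Proposition 3.29 of~\cite{Fre17}).

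To prove the identity, define $E_i:=\sum_m \frac{t^m}{m!}D^m(x_i)$; this is a polynomial in $t$ by local nilpotency. Consider the $\bbF$-algebra map
\[
\Phi=\exp(tD):\polx\to\polx[t],\qquad g\mapsto\sum_m \frac{t^m}{m!}D^m(g).
\]
It is a ring homomorphism by the Leibniz rule, which gives $D^m(gh)=\sum\binom{m}{j}D^j(g)D^{m-j}(h)$. It satisfies $\Phi(g)|_{t=0}=g$. It also satisfies $D_t\Phi(g)=\Phi(D(g))$, which I would check termwise by differentiating $t^m/m!$ and reindexing.

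Applying these properties with $g=x_i$ gives $E_i|_{t=0}=x_i$ and
\[
D_tE_i=\Phi(f_i)=f_i(\Phi(x_1),\ldots,\Phi(x_{i-1}))=f_i(E_1,\ldots,E_{i-1}),
\]
because $f_i\in\bbF[x_1,\ldots,x_{i-1}]$ and $\Phi$ is a homomorphism. By definition, $C_i$ satisfies the same recursion: $D_tC_i=f_i(C_1,\ldots,C_{i-1})$ and $C_i|_{t=0}=x_i$. I would therefore argue by induction on $i$. If $E_j=C_j$ for all $j<i$, then $E_i$ and $C_i$ are polynomials in $t$ with equal $t$-derivatives and equal constant terms. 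Since $\mathrm{char}\,\bbF=0$, formal integration in $\bbF(\bfx_n)[t]$ is unique up to the constant term, so $E_i=C_i$. For $i=k$ the same argument applies, or one can check directly that $E_k=x_k+tf_k$ because $D^2(x_k)=D(f_k)=0$.

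The final step is the substitution $t\mapsto -x_k/f_k$. This is a ring homomorphism $\polx[t]\to\frpolx$ that fixes the coefficients, so it carries the identity $C_i=E_i$ to~\eqref{eq:dix}. The main care point is the bookkeeping: $D$ must be applied to the coefficients before $t$ is specialized, since $D(x_k/f_k)=1\neq 0$. Keeping $t$ as an independent indeterminate until the very end handles this cleanly.
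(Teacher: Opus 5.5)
Your proof is correct, but it runs in the opposite direction from the paper's. The paper starts from the recursively defined curve $\bfc$ and differentiates it. The chain rule gives $D_t^m(g\circ\bfc)=D^m(g)\circ\bfc$ for every $g\in\polx$. Evaluating at $t=0$, where $\bfc(0)=(x_1,\ldots,x_n)$, identifies $D^m(x_i)$ as $m!$ times the $t^m$-coefficient of $C_i$, so the Taylor expansion of the polynomial $C_i$ is exactly $\sum_m \frac{t^m}{m!}D^m(x_i)$. You instead construct the candidate $\exp(tD)$ first. You verify that it is an algebra homomorphism intertwining $D$ and $D_t$, and then conclude $E_i=C_i$ from uniqueness of solutions of the triangular recursion.

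The paper's route is shorter and gives finiteness of the sum for free: since $C_i\in\polx[t]$, the coefficients $D^m(x_i)$ vanish for $m>\deg_t C_i$, so local nilpotency is not needed as an input. You need it up front to make $\Phi$ land in $\polx[t]$. You could drop that dependence by working in $\polx[[t]]$ and deducing polynomiality from $E_i=C_i$.

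Your route, in turn, makes the homomorphism $\exp(tD)$ explicit. Its composite with $t\mapsto -x_k/f_k$ is precisely the Dixmier map $\delta_s$. So the ring-homomorphism property of $\delta_s$, and the correct order of operations (apply $D$ before specializing $t$), are visible directly in your argument.
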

\begin{proof}
    For $i=1,\ldots,n$, let $C_i\in \bbF[x_1,\ldots,x_n,t]$ be as defined before 
    Lemma~\ref{lem:ic_facts}. Then $C_i$ is  
    the $i$-th component of a generic integral curve $\bfc$ and such an integral curve satisfies 
    the equation $D_t(C_i)=f_i(C_1,\ldots,C_{i-1})$. Also note that 
    $\bfc(0)=(x_1,\ldots,x_n)$. 
We claim that
\begin{equation}\label{eq:dtm}
    D_t^mC_i=D_t^m\left(x_i \circ \bfc\right) = D^m(x_i) \circ \bfc
\end{equation}
for all $m \geq 0$. For $m = 1$, the chain rule gives
\[
    D_t\bigl(x_i \circ \bfc\bigr)
    = \sum_{j=1}^{n} \bigl(D_{x_j}(x_i) \circ \bfc\bigr) \cdot D_t(C_j)
    = \sum_{j=1}^{n} \bigl(D_{x_j}(x_i) \circ \bfc\bigr) \cdot (f_j \circ \bfc)
    = \Bigl(\sum_{j=1}^{n} f_j D_{x_j}(x_i)\Bigr) \circ \bfc
    = D(x_i) \circ \bfc.
\]
Applying the same argument inductively to $D^{m-1}(x_i)$ in place of $x_i$ establishes \eqref{eq:dtm} for all $m$.
Evaluating at $t = 0$ and using $\bfc(0) = (x_1,\ldots,x_n)$ gives
\[
    \left(D_t^mC_i\right)_{t=0} = D^m(x_i),
\]
so the Taylor series of the coordinate $C_i$ is
\[
    C_i(t) = \sum_{m \geq 0} \frac{t^m}{m!}\,D^m(x_i).
\]
Evaluating at $t = -x_k/f_k$,  we obtain
\[
    C_i\left(-\frac{x_k}{f_k}\right)
    = \sum_{m \geq 0} \frac{(-1)^m}{m!}\,D^m(x_i)\,\frac{x_k^m}{f_k^m}.
\]
Since the $i$-th generator $z_i$ is given precisely by $C_i(-x_k/f_k)$, the proof is complete.
\end{proof}

The proof admits a flow-theoretic interpretation. The generic integral curve
\[
\bfc=(C_1,\ldots,C_n)
\]
defines a one-parameter family of rational maps
\[
\phi_t:\bbF^n\to \bbF^n,\qquad \phi_t(\bfx)=\bfc(\bfx)(t),
\]
which is the flow of the vector field associated with \(D\). Thus, for each \(\bfp\in U\), the point \(\phi_t(\bfp)\) is obtained by moving \(\bfp\) along its integral curve for time \(t\). If we set
\[
\tau=-\frac{x_k}{f_k},
\]
then \(\tau(\bfp)\) is precisely the unique time at which the integral curve through \(\bfp\) meets the hyperplane \(\calnV(x_k)\). In other words,
\[
\tau(\phi_t(\bfp))=\tau(\bfp)-t.
\]
Applying $D$ to $\tau$, we obtain
\[
D(\tau)=-1.
\]
Hence \(-\tau=x_k/f_k\) is a slice for \(D\), and the substitution \(t=-x_k/f_k\) in the Taylor expansion of \(C_i(t)\) is exactly the evaluation of the integral curve at its hitting time on the hypersurface \(x_k=0\).

The formula~\eqref{eq:dix} shows that the generators obtained by the method of integral curves coincide with 
the generators given by the Dixmier map. The function $x_k$ is said to be a {\em local slice} for the 
derivation $D$ in Theorem~\ref{teor_derLocNilSatH1H2}, since $D^2(x_k)=0$ and the function $x_k/f_k$ is 
said to be a {\em slice}, as $D(x_k/f_k)=1$. The Dixmier map $\delta_s:\bbF[\bm x_n]\to \bbF[\bm x_n,f_k^{-1}]$ 
for the slice $s=x_k/f_k$ is defined, for $a\in \polx$, as 
\[
\delta_s(a)=\sum_{m\geq 0}\frac{(-1)^m}{m!}D^m(a)s^m=\sum_{m\geq 0}\frac{(-1)^m}{m!}D^m(a)\left(\frac{x_k}{f_k}\right)^m.
\]  
As is known, the $\delta_s(x_i)$ with $i\neq k$ form an algebraically independent system of generators for $\frpolx^D$ (see~\cite[Principle~11, page~27]{Fre17}) and Theorem~\ref{th:dix} implies that this generating system agrees with 
the system obtained by the method of integral curves. In fact, the proof of Theorem~\ref{th:dix} also shows
that $\delta_s(x_i)$ is the Taylor expansion of the $i$-th component $C_i$ of a generic integral curve $\bfc$.

\section{Polynomial invariants}
\label{derivTriang_invPol}
Note that the generators $z_1,\ldots,z_{k-1},z_{k+1},\ldots,z_n$ of the invariant field $\frpolx^D$ in Corollary~\ref{coro_derLocNilSatH1H2} lie in 
the localization $\bbF[x_1,\ldots,x_n,f_k^{-1}]$. Since $f_k\in \frpolx^D$, one can multiply the generators by a suitable power of $f_k$ and obtain a polynomial generating set of the form 
\[ 
u_i=z_i=x_i\quad\mbox{for}\quad i\in \{1,\ldots,k-1\}
\] 
and 
\[ 
u_i=f_k^{\alpha_i}z_i=f_k^{\alpha_i}x_i+\widetilde C_{i} \quad\mbox{for}\quad i\in\{k+1,\ldots,n\}
\] 
where $\widetilde C_i\in\bbF[x_1,\ldots,x_{i-1}]$. 
\begin{theorem}\label{teor_nucPolDerTriIncGerFrac}
Using the notation of Theorem~\ref{teor_derLocNilSatH1H2} and the previous paragraph, the following 
are valid:
\begin{enumerate}
\item $\frpolx^D=\bbF(u_1,\ldots,u_{k-1},u_{k+1},\ldots,u_n)$;
\item the $u_i$ are algebraically independent;
\item $\polx^D\subseteq \bbF[u_1,\ldots,u_{k-1},u_{k+1},\ldots,u_n,f_k^{-1}]$.
\end{enumerate}
\end{theorem}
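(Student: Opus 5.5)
Parts (1) and (2) follow from Corollary~\ref{coro_derLocNilSatH1H2} by a change of generators. The $z_i$ with $i\neq k$ are algebraically independent and generate $\frpolx^D$. Since $f_k\in\bbF[x_1,\ldots,x_{k-1}]=\bbF[z_1,\ldots,z_{k-1}]$, the element $f_k$ is a nonzero polynomial in $u_1,\ldots,u_{k-1}$. Hence
\[
\bbF(u_1,\ldots,u_{k-1},u_{k+1},\ldots,u_n)=\bbF(z_1,\ldots,z_{k-1},z_{k+1},\ldots,z_n)=\frpolx^D,
\]
because each $z_i=f_k^{-\alpha_i}u_i$ lies in the left-hand field and conversely. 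This gives (1). For (2), the field $\frpolx^D$ has transcendence degree $n-1$ by Corollary~\ref{coro_isoFGammaKerD}, and it is generated by the $n-1$ elements $u_i$. A generating set of that cardinality must be a transcendence basis, so the $u_i$ are algebraically independent.

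For (3), I would work in the localization $\bbF[\bm x_n,f_k^{-1}]$ and use the Dixmier map $\delta_s$ with slice $s=x_k/f_k$. By Theorem~\ref{th:dix}, $\delta_s(x_i)=z_i$ for $i\neq k$, and $\delta_s(x_k)=0$. The plan has three steps.

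\begin{enumerate}
\item Show that $\delta_s$ is an $\bbF$-algebra homomorphism $\polx\to\bbF[\bm x_n,f_k^{-1}]$. This is the Leibniz/binomial identity for $\sum_m \frac{(-s)^m}{m!}D^m$, which holds because $D$ is locally nilpotent and $D(s)=1$. Flow-theoretically, $\delta_s(a)=a\circ\bfc(-x_k/f_k)$, and composition with a map is multiplicative, which gives a cleaner proof.
\item Show that $\delta_s(a)=a$ whenever $D(a)=0$, since then only the $m=0$ term survives.
\item Conclude. For $g\in\polx^D$, write $g$ as a polynomial in $x_1,\ldots,x_n$. Applying $\delta_s$ gives
\[
g=\delta_s(g)=g(z_1,\ldots,z_{k-1},0,z_{k+1},\ldots,z_n).
\]
Substituting $z_i=u_if_k^{-\alpha_i}$ shows $g\in\bbF[u_1,\ldots,u_{k-1},u_{k+1},\ldots,u_n,f_k^{-1}]$.
\end{enumerate}

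The main obstacle is the homomorphism property of $\delta_s$ in step 1. I would prove it via the curve picture: by the proof of Theorem~\ref{th:dix}, $\delta_s(a)$ is the Taylor expansion of $a\circ\bfc$ at $t=0$, evaluated at $t=-x_k/f_k$. The map $a\mapsto a\circ\bfc$ is a ring homomorphism $\polx\to\polx[t]$, and evaluation at $t=-x_k/f_k$ is also a homomorphism. It remains to check that $(a\circ\bfc)(t)=\sum_m \frac{t^m}{m!}D^m(a)$ for every polynomial $a$, not only for the coordinates $x_i$. This follows from the identity $D_t^m(a\circ\bfc)=D^m(a)\circ\bfc$, proved exactly as \eqref{eq:dtm}, together with $\bfc(0)=\bm x$.
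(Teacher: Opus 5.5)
Your proposal is correct. Parts (1) and (2) agree in substance with the paper, which just observes that $f_k\in\bbF[x_1,\ldots,x_{k-1}]$ is a nonzero invariant, so the $u_i$ and the $z_i$ generate the same field. Your transcendence-degree count is an equally valid way to get independence.

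For (3) you take a genuinely different route.

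The paper argues by leading-term reduction. By Lemma~\ref{lema_nucPolDerTriMonLidXk}, $\lm(h)$ of an invariant $h$ avoids $x_k$, so $\lt(h)=\alpha x_{k+1}^{\beta_{k+1}}\cdots x_n^{\beta_n}$ with $\alpha\in\bbF[x_1,\ldots,x_{k-1}]$. One then subtracts $\alpha u_{k+1}^{\beta_{k+1}}\cdots u_n^{\beta_n}$ from $f_k^{\beta}h$ and inducts on $\lm(h)$ in the lexicographic order over $\bbF[x_1,\ldots,x_{k-1}]$. This uses the specific shape $u_i=f_k^{\alpha_i}x_i+\widetilde C_i$ in an essential way.

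You instead use the Dixmier map as a ring retraction. The map $\delta_s$ is multiplicative, fixes $\polx^D$, and sends $x_i\mapsto z_i$ for $i\neq k$ and $x_k\mapsto 0$. Hence $g=g(z_1,\ldots,z_{k-1},0,z_{k+1},\ldots,z_n)$ for every $g\in\polx^D$. Your justification of multiplicativity is sound:
\begin{enumerate}
\item the chain-rule induction behind \eqref{eq:dtm} works verbatim for any polynomial $a$;
\item Taylor expansion is valid in $\polx[t]$ in characteristic zero;
\item the substitution $t\mapsto -x_k/f_k$ is a ring homomorphism $\polx[t]\to\bbF[\bm x_n,f_k^{-1}]$.
\end{enumerate}

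Your route is shorter and needs neither Lemma~\ref{lema_nucPolDerTriMonLidXk} nor the well-ordering induction. It produces an explicit rewriting of each invariant, with denominator $f_k^N$ where $N\le\deg(g)\max_i\alpha_i$. With the binomial proof of multiplicativity, it works for any locally nilpotent derivation with a local slice; it is the standard slice argument for such derivations.

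The paper's route, by contrast, is a SAGBI-style division procedure that stays inside the triangular leading-term framework. That is the same mechanism the implementation in Section~\ref{sec:algorithm} relies on for membership testing, which is presumably why the authors chose it.
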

\begin{proof}
    Statement (1) follows at once from the fact that $f_k\in\polx^D$, while statement (2) follows from the 
    fact that the $z_i$ are algebraically independent and $f_k\in \bbF[x_1,\ldots,x_{k-1}]$. 

(3)    Let \(h \in \polx^{D}\). Note that
    \begin{align*}
        \bbF[u_{1}, \dots, u_{k-1},u_{k+1},\ldots,u_{n}, f_{k}^{-1}] &= \bbF[u_{1}, \dots, u_{k-1}][u_{k+1}, \dots, u_{n}, f_{k}^{-1}] \\&= \bbF[x_{1}, \dots, x_{k-1}][u_{k+1}, \dots, u_{n}, f_{k}^{-1}].
    \end{align*}
    Thus, denoting \(R \coloneqq \bbF[x_{1}, \dots, x_{k-1}]\), we need to show that \(h \in R[u_{k+1}, \dots, u_{n}, f_{k}^{-1}]\). For this, we will use induction on \(\lm(h)\), where we use the lexicographic monomial 
    order on $R[x_k,\ldots,x_n]$
    with respect to \(x_{k} < \cdots < x_{n}\). Thus, if \(\lm(h) = 1\), then \(h \in R\) and the theorem's statement is true. Suppose that \(\lm(h) \neq 1\) and that the theorem's statement holds for any polynomial in \(\polx^{D}\) with a leading monomial smaller than \(\lm(h)\). By Lemma \ref{lema_nucPolDerTriMonLidXk}, we know that \(\lm(h)\) does not contain the variable \(x_{k}\). Thus, write the leading term of \(h\) as
    \[
        \lt(h) = \alpha x_{k+1}^{\beta_{k+1}}\cdots x_{n}^{\beta_{n}},
    \]
    where \(\alpha \in R\). Define
    \begin{align*}
        f &\coloneqq \alpha u_{k+1}^{\beta_{k+1}}\cdots u_{n}^{\beta_{n}} = \alpha(f_{k}^{\alpha_{k+1}}x_{k+1} + \widetilde C_{k+1})^{\beta_{k+1}} \cdots (f_{k}^{\alpha_{n}}x_{n} + \widetilde C_{n})^{\beta_{n}}\text{ and}\\ \beta &\coloneqq \beta_{k+1}\alpha_{k+1} + \cdots + \beta_{n}\alpha_{n}.
    \end{align*}
    Then, since \(\widetilde C_{i}\in\bbF[x_1,\ldots,x_{i-1}]\) does not involve \(x_i,\ldots,x_n\), for all \(i = k+1, \dots, n\), the leading term of \(f\) is 
    \[
        \lt(f) = \alpha f_{k}^{\beta}x_{k+1}^{\beta_{k+1}}\cdots x_{n}^{\beta_{n}}.
    \]
    Note that \(\alpha f_{k}^{\beta} \in R\). Thus, in the algebra \(R[x_{k+1}, \dots, x_{n}]\), we have that \(\lm(f) = \lm(h)\). Define \(h_{0} \coloneqq f_{k}^{\beta}h - f\). Then 
    $\lm(h_{0}) <  \lm(h)$. Note that, by construction, \(D(f) = 0\). Also note that, since \(h \in \polx^{D}\) and \(f_{k} \in \bbF[x_{1}, \dots, x_{k-1}]\), then \(D(f_{k}^{\beta}h) = 0\). Thus, \(h_{0} \in \polx^{D}\). Therefore, by induction, \(h_{0} \in R[u_{k+1}, \dots, u_{n}, f_{k}^{-1}]\). Hence, since \(f \in R[u_{k+1}, \dots, u_{n}, f_{k}^{-1}]\), we have that 
    \[
        h = (h_{0} + f)f_{k}^{-\beta} \in R[u_{k+1}, \dots, u_{n}, f_{k}^{-1}],
    \]
    which completes the proof.
    %\hfill\(\boxtimes\)
\end{proof}

\section{Invariants of nilpotent Lie algebras}
\label{gerAlgInvarRac_algoritmoLieNilp}
\label{sec:inv_nilp_lie}

Let $L$ be a finite-dimensional Lie algebra over a field $\bbF$ of characteristic zero. 
Its symmetric algebra $\bbF[L]$ may be identified with the algebra of polynomial functions on 
the dual space $L^{*}$: each $x\in L$ defines a linear function on $L^{*}$ by 
$\xi\mapsto \xi(x)$, and, by the universal property of the symmetric algebra, 
this extends uniquely to an algebra isomorphism between $\bbF[L]$ and the polynomial 
algebra on $L^{*}$. Hence, after choosing a basis $\{x_{1},\dots,x_{n}\}$ of $L$, 
$x_i$ can be viewed as the $i$-th coordinate function on $L^*$ with respect to the dual basis, and so 
we may regard $\bbF[L]$ as $\bbF[x_{1},\dots,x_{n}]$. 
Moreover, the adjoint representation of $L$ on itself extends uniquely to an action on 
$\bbF[L]$ by derivations: for each $x\in L$, the operator $\ad(x)$ is determined by its 
values on degree-one elements, namely $\ad(x)(y)=[x,y]$ for $y\in L$, and by the 
Leibniz rule on products. Passing to the field of fractions $\bbF(L)=\bbF(x_1,\ldots,x_n)$, this yields 
derivations still denoted by $\ad(x)$, and the rational invariants of $L$ are precisely 
the elements annihilated by all these derivations.
Hence the field of rational invariants of a Lie algebra \(L\) is
\[
    \bbF(L)^{L} \coloneqq \{f \in \bbF(L) \mid \ad(x)(f) = 0, \,\, \forall \; x \in L\} = \bigcap_{i=1}^{n}\frpolx^{\ad(x_{i})}.
\]

Suppose that $L$ is a nilpotent Lie algebra as above and suppose that 
$x_1,\ldots,x_n$ is a basis such that for all $1\leq i<j\leq n$, 
\[ 
[x_i,x_j]=\sum_{k=1}^{i-1}\alpha_{i,j}^kx_k
\] 
with some $\alpha_{i,j}^k\in \bbF$. Such a basis is said to be a {\em triangular basis} for $L$.

\begin{lemma}\label{lem:trbasis}
Suppose that $x_1,\ldots,x_n$ is a triangular basis for a nilpotent Lie algebra $L$ and suppose for some $k\leq n$ that 
\[ 
K=\{f\in \bbF(L)\mid \ad(x_i)(f)=0\mbox{ for $i=1,\ldots,k$}\}.
\] 
Then $K$ is invariant under $\ad(x)$ for all $x\in L$. 
\end{lemma}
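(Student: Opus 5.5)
The plan is to reduce the statement to the commutator identity for derivations. The map $x\mapsto \ad(x)$ is a Lie algebra homomorphism from $L$ into $\derfrpolx$. Indeed, on degree-one elements the Jacobi identity gives
\[
\ad(x)\ad(y)(z)-\ad(y)\ad(x)(z)=\ad([x,y])(z).
\]
The commutator of two derivations is again a derivation. Two derivations of $\frpolx$ that agree on the generators $x_1,\ldots,x_n$ agree on all of $\frpolx$, since a derivation is determined on polynomials by the Leibniz rule and on quotients by the quotient rule. Hence
\[
[\ad(x),\ad(y)]=\ad([x,y])
\]
holds as derivations of $\frpolx$. Since $K$ is defined by conditions on basis elements and $\ad$ is linear, it suffices to show $\ad(x_j)(K)\subseteq K$ for each $j\in\{1,\ldots,n\}$.

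Fix $f\in K$, $j\in\{1,\ldots,n\}$ and $i\in\{1,\ldots,k\}$. I will show that $\ad(x_i)\bigl(\ad(x_j)(f)\bigr)=0$. By the commutator identity,
\[
\ad(x_i)\ad(x_j)(f)=\ad(x_j)\ad(x_i)(f)+\ad([x_i,x_j])(f).
\]
The first term vanishes because $f\in K$ and $i\le k$. For the second term, split into cases.

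\emph{Case $i<j$.} The triangular basis condition gives $[x_i,x_j]=\sum_{l=1}^{i-1}\alpha_{i,j}^l x_l$. Then $\ad([x_i,x_j])(f)=\sum_{l<i}\alpha_{i,j}^l\,\ad(x_l)(f)$, and each term is zero because $l<i\le k$.

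\emph{Case $i>j$.} Use antisymmetry: $[x_i,x_j]=-[x_j,x_i]=-\sum_{l=1}^{j-1}\alpha_{j,i}^l x_l$. Here $l<j<i\le k$, so again every term vanishes.

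\emph{Case $i=j$.} The bracket is zero.

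Therefore $\ad(x_j)(f)\in K$ for every $j$. By linearity of $x\mapsto\ad(x)$, it follows that $\ad(x)(f)\in K$ for all $x\in L$.

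The only point needing care is the extension of the identity $[\ad(x),\ad(y)]=\ad([x,y])$ from $L$ to all of $\frpolx$. I would handle it with the uniqueness-of-derivations remark above: both sides are derivations of $\frpolx$ that agree on the generators $x_1,\ldots,x_n$. The rest is bookkeeping with the triangular structure constants. The key observation there is that the bracket $[x_i,x_j]$ only involves basis vectors $x_l$ with $l<\min(i,j)\le k$.
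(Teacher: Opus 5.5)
Your proof is correct and follows essentially the paper's route: rewrite $\ad(x_i)\ad(x)(f)$ using $[\ad(x_i),\ad(x)]=\ad([x_i,x])$, then use triangularity to place $[x_i,x]$ in the span of $x_1,\ldots,x_{i-1}$, all of which annihilate $f$. Your case split ($i<j$, $i>j$, $i=j$) and your uniqueness argument extending the commutator identity to all of $\frpolx$ just make explicit two points the paper uses without comment.
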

\begin{proof}
Suppose that $f \in K$ and let $x\in L$. We need to show that $\ad(x_i)\ad(x)(f)=0$ for all 
$i=1,\ldots,k$. Let $i\in\{1,\ldots,k\}$ and note that 
\[ 
\ad(x_i)\ad(x)=[\ad(x_i),\ad(x)]+\ad(x)\ad(x_i)=
\ad([x_i,x])+\ad(x)\ad(x_i).
\] 
Since the chosen basis is triangular, $[x_i,x]$ is a linear combination $\sum_{j=1}^{i-1}\alpha_j x_j$, and so $\ad([x_i,x])(f)=0$. Since $\ad(x_i)(f)=0$, by assumption, 
$\ad(x_i)\ad(x)(f)=0$, and hence $\ad(x)(f)\in K$. 
\end{proof}

Lemma~\ref{lem:trbasis} suggests a practical procedure to compute generators of the invariant field 
of $L$. We start by setting $K_0=\frpolx$ and define, for $i=1,\ldots,n$,
\begin{align}\label{eq:Ki}
K_i&=\{f\in \frpolx\mid \ad(x_j)(f)=0\mbox{ for all $j=1,\ldots,i$}\}\\&=
\{f\in K_{i-1}\mid \ad(x_i)(f)=0\}\notag.
\end{align}
As long as the restriction of $\ad(x_i)$ to $K_{i-1}$ is a triangular derivation, 
the procedure in Section~\ref{sec:triang} can be used to compute the kernel 
$K_{i-1}^{\ad(x_i)}$ of the restriction of $\ad(x_i)$ to $K_{i-1}$. This way, we obtain 
\[ 
\bbF(L)^L=K_n. 
\] 
The following theorem shows that this assumption is, in fact, valid. 
A system of polynomials $z_1,\ldots,z_m\in\bbF[x_1,\ldots,x_n]$ is said to be {\em triangular} if 
there is a sequence of indices $a_1<\cdots < a_m$ such that, for all $i$,
\[ 
z_i=x_{a_i}q_{i,1}+q_{i,2}\quad\mbox{where}\quad q_{i,1},q_{i,2}\in\bbF[x_1,\ldots,x_{a_i-1}] \mbox{ and } q_{i,1}\neq 0. 
\] 
A triangular system is clearly algebraically independent.

\begin{theorem} \label{teor_presDerTriangAd}
    Let us suppose that \(\{x_{1}, \dots, x_{n}\}\) is a triangular basis of a nilpotent Lie algebra \(L\), set 
    $K_0=\bbF(L)=\frpolx$ and, for $i\in\{1,\ldots,n\}$ let $K_i$ 
    be defined as in~\eqref{eq:Ki}. 
    Then, for each \(i = 1, \dots, n\), the following is valid.
    \begin{enumerate}[(a)]
        \item The field \(K_{i}\) is generated by a triangular system of polynomials \(\{z_{i,1}, \dots, z_{i,m_{i}}\} \subseteq \polx\). More precisely,  there is a 
        sequence of indices $1\leq a_{i,1}<\cdots <a_{i,m_i}\leq n$ such that, for each $j\in \{1,\ldots,m_i\}$,
        \[ 
        z_{i,j}=x_{a_{i,j}}q_{i,j,1}+q_{i,j,2}
        \] 
        such that $q_{i,j,1},q_{i,j,2}\in \bbF[x_1,\ldots,x_{a_{i,j}-1}]$ and $q_{i,j,1}\in \bbF[L]^L\setminus\{0\}$.  
        \item The set \(\{z_{i,1}, \dots, z_{i,m_{i}}\} \subseteq \polx\) is algebraically independent.
        \item For all \(j = 1, \dots, n\), \(\ad(x_{j})|_{K_{i}}\) is a triangular derivation with respect to the set \(\{z_{i,1}, \dots, z_{i,m_{i}}\}\) such that \(\ad(x_{j})(z_{i,1}) = 0\).
        \item There exists \(d_{i} \in \bbF[L]^L\cap \bbF[z_{i,1}, \dots, z_{i,m_{i}}]\) such that \(K_{i} \cap \polx \subseteq \bbF[z_{i,1}, \dots, z_{i,m_{i}}, d_{i}^{-1}]\).
    \end{enumerate}
\end{theorem}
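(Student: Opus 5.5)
We argue by induction on $i$, proving (a)--(d) together. Include the case $i=0$ as the base: $K_0=\frpolx$ is generated by $z_{0,j}=x_j$, so $a_{0,j}=j$, $q_{0,j,1}=1$ and $q_{0,j,2}=0$. This is a triangular system, and (d) holds with $d_0=1$.

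\emph{Condition (c) at level $i-1$.} For the inductive step we first have to know that (c) holds at level $i-1$, i.e.\ that every $\ad(x_j)$ restricted to $K_{i-1}$ is triangular in the $z_{i-1,l}$. Write $z=z_{i-1,l}=x_aq_1+q_2$ with $a=a_{i-1,l}$.
\begin{itemize}
\item Since $[x_j,x_b]$ is a combination of basis vectors of index $<\min(j,b)$, the derivation $\ad(x_j)$ preserves each subring $\bbF[x_1,\ldots,x_{b}]$.
\item Since $q_1\in\bbF[L]^L$, we get $\ad(x_j)(z)=q_1[x_j,x_a]+\ad(x_j)(q_2)\in\bbF[x_1,\ldots,x_{a-1}]$.
\item By Lemma~\ref{lem:trbasis}, $\ad(x_j)(z)\in K_{i-1}\cap\polx$.
\end{itemize}
So it suffices to show that $K_{i-1}\cap\bbF[x_1,\ldots,x_{a-1}]$ lies in $\bbF[z_{i-1,1},\ldots,z_{i-1,l-1}]$.

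By (d) we already know that $K_{i-1}\cap\polx\subseteq \bbF[z_{i-1,\bullet},d_{i-1}^{-1}]$. I would then run the leading-monomial reduction of Theorem~\ref{teor_nucPolDerTriIncGerFrac}(3), in lex order with $x_1<\cdots<x_n$, on the triangular system. Each $z_{i-1,m}$ has leading variable $x_{a_{i-1,m}}$, so an element involving only $x_1,\ldots,x_{a-1}$ can only use the $z_{i-1,m}$ with $m<l$.

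Removing the localization at $d_{i-1}$ is the step I expect to be the main obstacle. Because the leading coefficients $q_{i-1,m,1}$ are invariants, I would try to track homogeneity (all operators preserve polynomial degree) and to show that a genuine denominator would force a contradiction with $\lm$-considerations as in Lemma~\ref{lema_nucPolDerTriMonLidXk}. If this fails, I would weaken ``triangular'' to allow coefficients in $\bbF[z_{i-1,1},\ldots,z_{i-1,l-1},d_{i-1}^{-1}]$, with $d_{i-1}$ a $D$-constant. The results of Section~\ref{sec:triang} go through verbatim with that weakening.

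\emph{Inductive step.} Regard $K_{i-1}=\bbF(z_{i-1,1},\ldots,z_{i-1,m})$ as a rational function field, and let $D$ be the restriction of $\ad(x_i)$, which is triangular by (c). If $D=0$, then $K_i=K_{i-1}$ and we are done. Otherwise let $k$ be the least index with $g_k:=D(z_{i-1,k})\neq0$.

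Apply Corollary~\ref{coro_derLocNilSatH1H2} and then clear denominators as at the start of Section~\ref{derivTriang_invPol}. The new generators are the $z_{i-1,l}$ for $l<k$, together with
\[
u_l=g_k^{\alpha_l}\,C_l\!\left(-\tfrac{z_{i-1,k}}{g_k}\right)\quad\text{for } l>k .
\]
Each $u_l$ has the form $g_k^{\alpha_l}z_{i-1,l}+\widetilde C_l(z_{i-1,1},\ldots,z_{i-1,l-1})$. Substituting the triangular form of $z_{i-1,l}$ gives
\[
u_l=x_{a_{i-1,l}}\,g_k^{\alpha_l}q_{i-1,l,1}+(\text{terms in lower variables}),
\]
which yields (a) and hence (b). For the new leading coefficients to be invariants, we need $g_k\in\bbF[L]^L$. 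We have $g_k\in K_{i}$, since it lies in $\bbF[z_{i-1,1},\ldots,z_{i-1,k-1}]$, which is killed by $D$. To get full invariance, I would reorder so that the chosen generator is minimal, making $g_k$ killed by all $\ad(x_j)$ through the triangularity from (c). Part (c) for $K_i$ then follows by repeating the argument of the first paragraph.

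Finally, (d) follows from Theorem~\ref{teor_nucPolDerTriIncGerFrac}(3) applied inside $\bbF[z_{i-1,\bullet},d_{i-1}^{-1}]$, taking $d_i=d_{i-1}g_k$.
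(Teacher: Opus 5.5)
Your base case and your derivation of (a) and (b) from Corollary~\ref{coro_derLocNilSatH1H2} are fine. There is, however, a genuine gap at (c).

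\textbf{The gap in (c).} Part (c) at level $i-1$ is already in your induction hypothesis, so the real task is (c) for the new generators. You get it by repeating your first paragraph, which reduces (c) to the inclusion $K_{i-1}\cap\bbF[x_1,\ldots,x_{a-1}]\subseteq\bbF[z_{i-1,1},\ldots,z_{i-1,l-1}]$. That inclusion says polynomial invariants are polynomials in the field generators, and it is false in general. So no homogeneity or leading-monomial argument can remove $d_{i-1}^{-1}$. Here is a counterexample:
\begin{itemize}
\item Take $D=z_1D_{z_2}+z_2D_{z_3}+z_3D_{z_4}$, with kernel generators $z_1$, $u_3=2z_1z_3-z_2^2$ and $u_4=3z_1^2z_4-3z_1z_2z_3+z_2^3$.
\item The $D$-constant $(u_4^2+u_3^3)/z_1^2$ is a polynomial in $z_1,\ldots,z_4$, but it does not lie in $\bbF[z_1,u_3,u_4]$.
\item For the $6$-dimensional standard filiform algebra this gives an element of $\bbF(L)^L\cap\bbF[x_1,\ldots,x_4]$ that is not a polynomial in the first three generators, even though the fourth generator has leading variable $x_5$.
\end{itemize}
Your fallback, allowing $d_{i-1}^{-1}$ in the coefficients, proves a weaker statement than (c).

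\textbf{The missing idea.} You may still rescale the new generators by powers of the invariant $g_k$. This is the construction in the paper. Go through $l=k+1,\ldots,m$ in order.
\begin{itemize}
\item For each $j$, $\ad(x_j)(u_l)$ lies in $\bbF[z_{i-1,1},\ldots,z_{i-1,l-1}]$.
\item It is a $D$-constant, since $D\ad(x_j)(u_l)=\ad(x_j)D(u_l)+\ad([x_i,x_j])(u_l)=0$, where $[x_i,x_j]\in\langle x_1,\ldots,x_{i-1}\rangle$.
\item So Theorem~\ref{teor_nucPolDerTriIncGerFrac}(3) writes it as $h_j/g_k^{\beta_j}$, with $h_j$ a polynomial in the already rescaled earlier generators. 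Generators of index $\geq l$ cannot occur, because each contains its own leading variable linearly and the left-hand side involves only lower variables.
\item Replace $u_l$ by $g_k^{\max_j\beta_j}u_l$. Since $\ad(x_j)(g_k)=0$, this clears every denominator, and it preserves (a) and (b).
\end{itemize}

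\textbf{Two smaller points.} Abbreviate $z_l=z_{i-1,l}$.
\begin{itemize}
\item \emph{Invariance of $g_k=D(z_k)$.} No reordering is needed, but triangularity alone does not suffice. You need the identity $\ad(y)(g_k)=\ad(x_i)\ad(y)(z_k)-\ad([x_i,y])(z_k)$. The first term vanishes because $\ad(y)(z_k)\in\bbF[z_1,\ldots,z_{k-1}]$ by (c), and $D$ kills $z_1,\ldots,z_{k-1}$ by minimality of $k$. The second vanishes because $[x_i,y]\in\langle x_1,\ldots,x_{i-1}\rangle$ annihilates $K_{i-1}$.
\item \emph{Choice of $d_i$ in (d).} Let $R_i$ be the polynomial ring in the new generators. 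Your argument correctly gives $K_i\cap\polx\subseteq R_i[g_k^{-1},d_{i-1}^{-1}]$, using $\bbF[z_\bullet,d_{i-1}^{-1}]^D=\bbF[z_\bullet]^D[d_{i-1}^{-1}]$. But $d_i=d_{i-1}g_k$ need not lie in $R_i$, since $d_{i-1}$ is only known to be in $R_i[g_k^{-1}]$. For instance, with $d=(u_4^2+u_3^3)/z_1^2$ as above, $z_1d\notin\bbF[z_1,u_3,u_4]$. Instead write $d_{i-1}=p/g_k^s$ with $p\in R_i$ and take $d_i=g_kp=g_k^{s+1}d_{i-1}$. Then $d_{i-1}^{-1}=g_k^{s+1}/d_i$ and $g_k^{-1}=p/d_i$.
\end{itemize}
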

\begin{proof}
    We will prove this theorem using induction on \(i\). If \(i = 1\), then \(K_{i} = K_{1} = \bbF(L) = \frpolx\) (since $x_1$ is centeral in $L$) and statements (a)--(d) hold for the generating set $x_1,\ldots,x_n$ and 
    with $d_0=1$.

    Suppose, by induction, that the statement holds for some \(i \in \{1, \dots, n-1\}\).
    To simplify the notation, we omit the index $i$ from the data that is available by the induction hypothesis. In other words, we denote the triangular system \(z_{i,1}, \dots, z_{i,m_{i}}\) 
    simply by $z_1,\ldots,z_m$ and the restriction of $\ad(x_{i+1})$ to $K_i$ by $D$. 
    By assumption, there is an index sequence \(1 \le a_{1} < \cdots < a_{m} \le n\)  such that 
    for all \(j = 1, \dots, m\),
        \[
            z_{j} = x_{a_{j}}q_{j,1} + q_{j,2},
        \]
        with \(q_{j,1}, q_{j,2} \in \bbF[x_{1}, \dots, x_{a_{j}-1}]\), $q_{j,1}\in\bbF[L]^L$ and $q_{j,1} \neq 0$.
    By the induction hypothesis, \(D\) is a triangular derivation with respect to the set \(\{z_{1}, \dots, z_{m}\}\). If $D=0$, then $K_{i+1}=K_i$ and taking the generators $z_1,\ldots,z_m$ for $K_{i+1}$,  
    the statements of the theorem remain true for $i+1$. If $D\neq 0$, then  
    \[
        D = D(z_k)D_{z_k}+D(z_{k+1})D_{z_{k+1}}+\cdots+D(z_m)D_{z_m}=f_{k}D_{z_{k}} + f_{k+1}D_{z_{k+1}}+\cdots + f_{m}D_{z_{m}},
    \]
    where \(f_{k} \neq 0\) and \(f_{j} \in \bbF[z_{1}, \dots, z_{j-1}]\), for all \(j = k, \dots, m\). 
    It is also true by the induction hypothesis that $D(z_1)=0$ and hence $k\geq 2$. By Theorem~\ref{teor_nucPolDerTriIncGerFrac},  the kernel \(K_{i+1}\) of $D$ has an algebraically independent generating system  of 
    the following form:
    \[
        \begin{array}{cccccc}
            u_{1} = z_{1}, &
            \ldots, &
            u_{k-1}  = z_{k-1}, &
            u_{k+1} = f_{k}^{\alpha_{k+1}}z_{k+1} + g_{k+1}, &
            \ldots, &
            u_{m} = f_{k}^{\alpha_{m}}z_{m} + g_{m},
        \end{array}
    \]
    where \(g_{j} \in \bbF[z_{1}, \dots, z_{j-1}]\), for \(j = k+1, \dots, m\). 
        Note that $u_k$ is missing from the list $u_s$ and this will be indicated by $\cancel{u_k}$ when 
    we list these generators. 
Writing out these generators, we have that 
    \begin{align*}
    u_j&=z_j=x_{a_j}q_{j,1}+q_{j,2}\mbox{ for } j=1,\ldots,k-1\\
    u_j&=f_{k}^{\alpha_{j}}z_{j} + g_{j}=x_{a_j}f_k^{\alpha_j}q_{j,1}+f_k^{\alpha_j}q_{j,2}+g_j 
    \mbox{ for } j=k+1,\ldots,m.
    \end{align*}
    Since \(z_{j} \in \bbF[x_{1}, \dots, x_{a_{j}}]\), it follows that 
    \[ 
    f_{k}\in 
    \bbF[z_1,\ldots,z_{k-1}]\subseteq \bbF[x_{1}, \dots, x_{a_{k-1}}]\quad\mbox{and}\quad 
    g_s\in\bbF[z_1,\ldots,z_{s-1}]\subseteq \bbF[x_1,\ldots,x_{a_s-1}],
    \]  for all \(s = k+1, \dots, m\).
    Thus the system \(\{u_{1}, \dots, 
    \cancel{u_{k}},\ldots,u_{m}\} \subseteq \polx\) is triangular with respect to 
    the index sequence  
    \[ 
    1 \le a_{1} < \cdots < \cancel{a_{k}} < \cdots < a_{m} \le n.
    \] 
    In particular, the generating system $u_s$ is algebraically independent. 

    In order to conclude that the generating system $u_s$ satisfies the 
    conditions (a) and (b) of the theorem, we need to show that the coefficient 
    of $x_{a_s}$ in $u_s$ is invariant under $\ad (y)$ for all $y\in L$. For $s\leq k-1$, 
    this follows from the induction hypothesis. For $s\geq k+1$, this follows, once 
    we prove that $\ad(y)(f_k)=0$ for all $y\in L$. 
    Let us now verify this claim.  In fact, $f_k=D(z_k)=\ad(x_{i+1})(z_k)$ 
    and so 
    \[ 
        \ad(y)(f_k)=\ad(y)\ad(x_{i+1})(z_k)=\ad(x_{i+1})\ad(y)(z_k)-\ad([x_{i+1},y])(z_k).
    \] 
    Now, by the induction hypothesis, $\ad(y)$ is a triangular derivation with respect to the 
    generating system $z_1,\ldots,z_m$, and so $\ad(y)(z_k)\in\bbF[z_1,\ldots,z_{k-1}]$, which gives that the first term $\ad(x_{i+1})\ad(y)(z_k)=D(\ad(y)(z_k))$ 
    of the last displayed equation is zero. Since the given basis of $L$ is triangular, we also obtain 
    that $[x_{i+1},y]$ is a linear combination of $x_1,\ldots,x_i$ which shows that the second term is also 
    equal to zero. Hence $\ad(y)(f_k)=0$. Therefore, we have that the set \(\{u_{1}, \ldots,\cancel{u_k},\dots, u_{m}\}\) satisfies conditions (a) and (b) of the theorem.
    If \(j = 1, \dots, k-1\), then, by the induction hypothesis, it holds, for all 
    \(\ell = 1, \dots, n\), that \(\ad(x_{\ell})(u_{j}) \in \bbF[u_{1}, \dots, u_{j-1}]\), because, in this case, \(u_{j} = z_{j}\). However, this property does not necessarily hold if \(j \ge k+1\), possibly requiring a small alteration in the set \(\{u_{k+1}, \dots, u_{m}\}\). 
    Define $u_1'=u_1,\ldots,u_{k-1}'=u_{k-1}$.
    We claim that there exist $\beta_{k+1},\ldots,\beta_m\in\bbZ_{\ge 0}$ such that, setting 
    $u_s'=f_k^{\beta_s}u_s$ for all $s\in\{k+1,\ldots,m\}$, the modified generating set $\{u_1',\ldots,\cancel{u_k'},\ldots
    u'_m\}$ satisfies statements (a), (b), and (c) of the theorem. 
    Since $f_k\in\bbF[z_1,\ldots,z_{k-1}]\subseteq\bbF[x_1,\ldots,x_{a_{k-1}}]$, we have that the generators $u_i'$ satisfy 
    statements (a) and (b), since the generators $u_i$ also satisfy the same conditions. 
    So we need only be concerned with statement (c).

     Assume by induction that, for some 
    $r\in\{k-1,k+1,\ldots,m-1\}$, we have constructed the generating set $u_1',\ldots,u_{k-1}',u_{k+1}',\ldots,u_r',u_{r+1},\ldots,u_m$ for $K_{i+1}$ that satisfies 
    conditions (a)--(b) of the theorem and also $\ad(x_\ell)(u_j')\in\bbF[u_1',\ldots,\cancel{u_k},\ldots,u_{j-1}']$ for 
    $\ell=1,\ldots,n$ and for $j=1,\ldots,r$.
Let $\ell\in\{1,\ldots,n\}$. Since \(\ad(x_{\ell})\) is a triangular derivation with respect to the set \(\{z_{1}, \dots, z_{m}\}\), \(\ad(x_{\ell})(f_{k}) = 0\) and \(g_{r+1} \in \bbF[z_{1}, \dots, z_{r}]\), we have that
    \begin{align} \label{eq_presDerTriangAd_adZ}
        \ad(x_{\ell})(u_{r+1}) &= \ad(x_{\ell})(f_{k}^{\alpha_{r+1}}z_{r+1} + g_{r+1})  \notag \\
        &= f_{k}^{\alpha_{r+1}}\ad(x_{\ell})(z_{r+1}) + \ad(x_{\ell})(g_{r+1}) \in \bbF[z_{1}, \dots, z_{r}].
    \end{align}
    Furthermore, 
    \[
        D\big(\ad(x_{\ell})(u_{r+1})\big) = \ad(x_{\ell})\big(D(u_{r+1})\big) - \ad\big([x_{\ell}, x_{i+1}]\big)(u_{r+1}) = 0,
    \]
    since \(u_{r+1} \in K_{i+1}\) and \([x_{\ell}, x_{i+1}]\) is a linear combination of \(x_{1}, \dots, x_{i}\). Therefore, by Theorem~\ref{teor_nucPolDerTriIncGerFrac}, there exist \(\beta_{r+1,\ell} \in \bbZ_{\ge 0}\) and \(h_{\ell} \in \bbF[t_{1}, \dots, t_{m-1}]\) such that
    \[
        \ad(x_{\ell})(u_{r+1}) = \dfrac{h_{\ell}(u_{1}', \dots,\cancel{u_k'},\ldots, u_{r}', u_{r+1}, \dots, u_{m})}{f_{k}^{\beta_{r+1,\ell}}} \in \bbF[u_{1}', \dots, \cancel{u_k'},\ldots,u_{r}', u_{r+1}, \dots, u_{m}, f_{k}^{-1}].
    \]
    From this, it follows that
    \[
        f_{k}^{\beta_{r+1,\ell}}\ad(x_{\ell})(u_{r+1}) = h_{\ell}(u_{1}', \dots,\cancel{u_k'},\ldots, u_{r}', u_{r+1}, \dots, u_{m}).
    \]
    Note that the left-hand side of the above equality is a polynomial in 
    \[
        \bbF[z_{1}, \dots, z_{r}] \subseteq \bbF[x_{1}, \dots, x_{a_{r}}].
    \]
    Since \(h_{\ell} \in \bbF[t_{1}, \dots, t_{m-1}]\) and since \(u_{j}\) contains the variable \(x_{a_{j}}\) in degree \(1\), but does not contain the variables \(x_{a_{j+1}}, \dots, x_{n}\), for all \(j = r+1, \dots, m\), then 
    \[ 
    h_{\ell}(u_{1}', \dots, \cancel{u_k'},\ldots,u_{r}', u_{r+1}, \dots, u_{m}) = h_{\ell}(u_{1}', \dots, \cancel{u_k'},\ldots,u_{r}').
    \]  
    Therefore,
    \begin{equation} \label{eq_presDerTriangAd_adFrac}
        \ad(x_{\ell})(u_{r+1}) = \dfrac{h_{\ell}(u_{1}', \dots, \cancel{u_k'},\ldots,u_{r}')}{f_{k}^{\beta_{r+1,\ell}}}.
    \end{equation}
    Define
    \[
        \beta_{r+1} \coloneqq \max(\beta_{r+1,1}, \dots, \beta_{r+1,n})
    \]
    and consider the set formed by the elements
    \[
        \begin{array}{ccccccccc}
            u_{1}', &
            \ldots, &
            \cancel{u_k'},&
            \ldots,&
            u_{r}', &
            u_{r+1}' \coloneqq f_{k}^{\beta_{r+1}}u_{r+1}, &
            u_{r+2}, &
            \ldots, &
            u_{m}.
        \end{array}
    \]
    %
%    Since \(K_{i+1} = \bbF(u_{1}, \dots, \cancel{u_k},\ldots,u_{m})\) and as \(f_{k} \in \bbF[u_{1}, \dots, u_{k-1}]\), then we have that 
 %   \[
 %   K_{i+1} = \bbF(u_{1}', \dots, \cancel{u_k'},\ldots,u_{r+1}', u_{r+2}, \dots, u_{m}).
 %   \]  
 %   Furthermore, since \(\{u_{1}, \dots, u_{m}\}\) is a triangular system of polynomials with respect to 
%    the index sequence \(1 \le a_{1} < \cdots < \cancel{a_{k}} < \cdots < a_{m} \le n\), the same is true for  the generating set \(\{u_{1}', \dots, \cancel{u_k'},\ldots,u_{r+1}', u_{r+2}, \dots, u_{m}\}\) and such a set is algebraically independent. Moreover, \(u_{r+1}' = f_{k}^{\beta_{r+1}}f_{k}^{\alpha_{r+1}}z_{r+1} + f_{k}^{\beta_{r+1}}g_{r+1}\), where \(f_{k}^{\beta_{r+1}}f_{k}^{\alpha_{r+1}} \in \bbF[L]^{L}\) and \(f_{k}^{\beta_{r+1}}f_{k}^{\alpha_{r+1}}, f_{k}^{\beta_{r+1}}g_{r} \in \bbF[x_{1}, \dots, x_{a_{r}}]\). Therefore, the set \(\{u_{1}', \dots, \cancel{u_k'},\ldots,u_{r+1}', u_{r+2}, \dots, u_{m_{i}-1}\}\) satisfies (a) and (b).
    %
 %   
    %
    Compute, for $\ell\in\{1,\ldots,n\}$, 
    \[
        \begin{split}
            \ad(x_{\ell})(u_{r+1}') &= \ad(x_{\ell})(f_{k}^{\beta_{r+1}}u_{r+1}) = f_{k}^{\beta_{r+1}}\ad(x_{\ell})(u_{r+1}) \overset{\mathclap{\substack{\text{\eqref{eq_presDerTriangAd_adFrac} \vspace{1pt}} \vspace{10pt} \mathstrut}}}{=}  f_{k}^{\beta_{r+1}}\frac{h_{\ell}(u_{1}', \dots, \cancel{u_k'},\ldots,u_{r}')}{f_{k}^{\beta_{r+1,\ell}}} = \\
            &= f_{k}^{\beta_{r+1}-\beta_{r+1,\ell}}h_{\ell}(u_{1}', \dots, \cancel{u_k'},\ldots,u_{r}') \in \bbF[u_{1}', \dots, \cancel{u_k'},\ldots,u_{r}'],
        \end{split}
    \]
    since \(\beta_{r+1}-\beta_{r+1,\ell} \ge 0\). Therefore, \(\ad(x_{\ell})(u_{j}') \in \bbF[u_{1}', \dots, \cancel{u_k'},\ldots,u_{j-1}']\), for all \(j = 1, \dots, r+1\) with $j\neq k$. 
    
    Performing this change of generator for each $r$, we obtain the generating system
    \[
        \begin{array}{cccccccc}
            z_{i+1,1}=u_{1}', &
            \ldots, &
            z_{i+1,k-1}=u_{k-1}', &
            z_{i+1,k}=u_{k+1}', &
            \ldots,&
            z_{i+1,m-1}=u_{m}' 
        \end{array}
    \]
    for $K_{i+1}$ that satisfies (a), (b), and (c). 
    
    Let us now turn to assertion~(d).
    For this, note that
    \begin{equation} \label{eq_presDerTriangAd_parteD}
        K_{i+1} \cap \bbF[L] = \big(K_{i} \cap \bbF[L]\big)^{\ad(x_{i+1})} \overset{\mathclap{\substack{\text{Induction} \\ \text{hypothesis} \vspace{5pt} \mathstrut}}}{\subseteq} \bbF[z_{1}, \dots, z_{m}, d_{i}^{-1}]^{\ad(x_{i+1})}.
    \end{equation}
    We will show that the following equality holds:
    \begin{equation} \label{eq_presDerTriangAd_invPolGer}
        \bbF[z_{1}, \dots, z_{m}, d_{i}^{-1}]^{\ad(x_{i+1})} = \bbF[z_{1}, \dots, z_{m}]^{\ad(x_{i+1})}[d_{i}^{-1}].
    \end{equation}
    Note that the inclusion \(\bbF[z_{1}, \dots, z_{m}]^{\ad(x_{i+1})}[d_{i}^{-1}] \subseteq \bbF[z_{1}, \dots, z_{m}, d_{i}^{-1}]^{\ad(x_{i+1})}\) is direct, since \(d_{i} \in \bbF[L]^{L}\). Now, let \(\dfrac{p(z_{1}, \dots, z_{m})}{d_{i}^{r}} \in \bbF[z_{1}, \dots, z_{m}, d_{i}^{-1}]^{\ad(x_{i+1})}\). Then
    \begin{equation}\label{eq_presDerTriangAd_numZero}
        0 = \ad(x_{i+1})\left(\dfrac{p(z_{1}, \dots, z_{m})}{d_{i}^{r}}\right) = \dfrac{\ad(x_{i+1})\big(p(z_{1}, \dots, z_{m})\big)d_{i}^{r}}{d_{i}^{2r}} = \dfrac{\ad(x_{i+1})\big(p(z_{1}, \dots, z_{m})\big)}{d_{i}^{r}},
    \end{equation}
    that is, \(\ad(x_{i+1})\big(p(z_{1}, \dots, z_{m})\big) = 0\). This implies the other inclusion, which demonstrates Equation \eqref{eq_presDerTriangAd_invPolGer}. Thus, returning to Equation \eqref{eq_presDerTriangAd_parteD}, we obtain
    \[
        K_{i+1} \cap \bbF[L] \subseteq \bbF[z_{1}, \dots, z_{m}]^{\ad(x_{i+1})}[d_{i}^{-1}].
    \]
    From Theorem \ref{teor_nucPolDerTriIncGerFrac}, we have that
    \[
        \bbF[z_{1}, \dots, z_{m}]^{\ad(x_{i+1})} \subseteq \bbF[u_{1}', \dots, \cancel{u_k'},\ldots,u_{m}', f_{k}^{-1}].
    \]
    Thus,
    \[
        K_{i+1} \cap \bbF[L] \subseteq \bbF[u_{1}', \dots, \cancel{u_k'},\ldots,u_{m}', f_{k}^{-1}, d_{i}^{-1}].
    \]
    By hypothesis, \(d_{i} \in \bbF[L]^{L} \cap \bbF[z_{1}, \dots, z_{m}]\), so \(d_{i} \in \bbF[z_{1}, \dots, z_{m}]^{\ad(x_{i+1})}\). Thus, by Theorem \ref{teor_nucPolDerTriIncGerFrac}, there exist a polynomial \(p\) in \(m-1\) variables and an \(s \in \bbZ_{\ge 0}\) such that
    \[
        d_{i} = \dfrac{p(u_{1}', \ldots,\cancel{u_k'},\dots, u_{m}')}{f_{k}^{s}} \in \bbF[u_{1}', \dots, \cancel{u_k'},\ldots,u_{m}', f_{k}^{-1}].
    \]
    Define
    \[
        d_{i+1} \coloneqq f_{k}p(u_{1}', \dots, \cancel{u_k'},\ldots,u_{m}').
    \]
    One can show, as in Equation \eqref{eq_presDerTriangAd_numZero}, that \(p(u_{1}', \dots, \cancel{u_k'},\ldots,u_{m}') \in \bbF[L]^{L}\). Since the same holds for \(f_{k}\), we conclude that \(d_{i+1} \in \bbF[L]^{L}\). Also note that \(d_{i+1} \in \bbF[u_{1}', \dots, \cancel{u_k'},\ldots,u_{m}']\). Finally, observe that
    \[
        d_{i}^{-1} = \dfrac{f_{k}^{s}}{p(u_{1}', \dots, \cancel{u_k'},\ldots,u_{m}')} = \dfrac{f_{k}^{s}}{p(u_{1}', \dots, \cancel{u_k'},\ldots,u_{m}')}\dfrac{f_{k}}{f_{k}} = \dfrac{f_{k}^{s+1}}{d_{i+1}}
    \]
    and that
    \[
        f_{k}^{-1} = \dfrac{p(u_{1}', \dots, \cancel{u_k'},\ldots,u_{m}')}{d_{i+1}}.
    \]
    Therefore,
    \[
        K_{i+1} \cap \bbF[L] \subseteq \bbF[u_{1}', \dots, \cancel{u_k'},\ldots,u_{m}', f_{k}^{-1}, d_{i}^{-1}] \subseteq \bbF[u_{1}', \dots, \cancel{u_k'},\ldots,u_{m}', d_{i+1}^{-1}]
    \]
    which completes the proof.
    %\hfill\(\boxtimes\)
\end{proof}

\section{The Beltrametti--Blasi Formula for nilpotent Lie algebras}
\label{gerAlgInvarRac_consAlg}

The following lemma and corollary together establish the Beltrametti--Blasi formula for nilpotent Lie algebras. Corollary \ref{coro_FLLgrauTras} aligns with the formula presented in \cite{BB66}, which gives 
an upper bound for  the number of functionally independent analytic invariants; here, however, we phrase the result in terms of transcendence degree, rather than the cardinality of the maximal set of functionally independent analytic invariants.

\begin{lemma} \label{lem_annKiNilpLie}
    Suppose that $L$ is a nilpotent Lie algebra with triangular basis 
    \(x_{1}, \dots, x_{n}\). For \(i = 0, 1, \dots, n\), let \(K_{i}\) be as in \eqref{eq:Ki}, with \(K_{0} = \frpolx\) and let \(S_{i}\) be the \(\frpolx\)-submodule of \(\derfrpolx\) generated by \(\ad(x_{1}), \dots, \ad(x_{i})\). Set
    \[
        \Ann(K_{i}) \coloneqq \{D \in \derfrpolx \mid D(f) = 0 \text{ for all } f \in K_{i}\}.
    \]
    Then \(S_{i} = \Ann(K_{i})\) for every \(i = 0, \dots, n\). In particular, for \(i \geq 1\),
    \[
        \frpolx^{\ad(x_{1})} \cap \cdots \cap \frpolx^{\ad(x_{i-1})} \subseteq \frpolx^{\ad(x_{i})} \quad\text{if and only if}\quad \ad(x_{i}) \in S_{i-1}.
    \]
\end{lemma}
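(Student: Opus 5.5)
The plan is to prove the two inclusions $S_i\subseteq\Ann(K_i)$ and $\Ann(K_i)\subseteq S_i$, the second by a dimension count over $\frpolx$.

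\emph{Easy inclusion.} $S_i\subseteq \Ann(K_i)$ holds because every element of $K_i$ is killed by $\ad(x_1),\dots,\ad(x_i)$. It is then killed by any $\frpolx$-linear combination of them.

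\emph{Standard fact.} Note that $\Ann(K_i)=\Der_{K_i}(\frpolx)$. Since $\operatorname{char}\bbF=0$ and $\frpolx$ is finitely generated over $K_i$, the extension is separably generated. Hence $\Der_{K_i}(\frpolx)$ is an $\frpolx$-vector space of dimension $\trdeg_{K_i}\frpolx=n-\trdeg_{\bbF}K_i$. The reason is that a derivation trivial on $K_i$ is determined by its values on a separating transcendence basis of $\frpolx/K_i$, and these values can be prescribed freely.

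\emph{Transcendence degrees from Theorem~\ref{teor_presDerTriangAd}.} By that theorem, $\trdeg_{\bbF}K_i=m_i$. Moreover, its proof shows that passing from $K_i$ to $K_{i+1}$ falls into one of two cases:
\begin{itemize}
\item[(a)] if $\ad(x_{i+1})|_{K_i}=0$, then $K_{i+1}=K_i$ and $m_{i+1}=m_i$;
\item[(b)] if $\ad(x_{i+1})|_{K_i}\neq0$, then one generator is dropped and $m_{i+1}=m_i-1$.
\end{itemize}
In case (b) this is the content of Corollary~\ref{coro_derLocNilSatH1H2} applied to the triangular restriction.

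\emph{Induction on $i$.} I will prove $\dim S_i=n-m_i$ together with $S_i=\Ann(K_i)$. For $i=0$, $K_0=\frpolx$, so $\Ann(K_0)=0=S_0$. Assume the claim for $i$.
\begin{itemize}
\item[(a)] If $\ad(x_{i+1})|_{K_i}=0$, then $\ad(x_{i+1})\in\Ann(K_i)=S_i$. Therefore $S_{i+1}=S_i$ and $\dim S_{i+1}=n-m_i=n-m_{i+1}$.
\item[(b)] If $\ad(x_{i+1})|_{K_i}\neq0$, then $\ad(x_{i+1})\notin\Ann(K_i)=S_i$. Therefore $\dim S_{i+1}=\dim S_i+1=n-m_{i+1}$.
\end{itemize}
In both cases $S_{i+1}\subseteq\Ann(K_{i+1})$, and both spaces have dimension $n-m_{i+1}$, so they are equal.

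\emph{The ``in particular'' part.} The inclusion $\frpolx^{\ad(x_1)}\cap\cdots\cap\frpolx^{\ad(x_{i-1})}\subseteq\frpolx^{\ad(x_i)}$ says exactly that $K_{i-1}$ is killed by $\ad(x_i)$. This is equivalent to $\ad(x_i)\in\Ann(K_{i-1})$, and $\Ann(K_{i-1})=S_{i-1}$ by the main statement.

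\emph{Main obstacle.} The delicate step is the standard dimension formula $\dim\Der_{K}(\frpolx)=\trdeg_K\frpolx$. I would cite it as a known fact in characteristic zero rather than reprove it. I would also check carefully that the dichotomy $m_{i+1}\in\{m_i,m_i-1\}$ is exactly what the construction in Theorem~\ref{teor_presDerTriangAd} delivers.
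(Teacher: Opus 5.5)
Your proof is correct and follows the paper's argument: induction on $i$, the easy inclusion $S_i\subseteq\Ann(K_i)$, and a dimension count. The count combines $\dim\Ann(K_i)=n-\trdeg K_i$ with the fact, coming from the construction in Theorem~\ref{teor_presDerTriangAd}, that the transcendence degree drops by exactly one precisely when $\ad(x_{i+1})|_{K_i}\neq 0$.

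The only difference is how you get $\dim\Ann(K_i)=n-\trdeg K_i$. You cite the general fact $\dim_E\Der_K(E)=\trdeg_K E$ for finitely generated extensions $E/K$ in characteristic zero. The paper proves it directly: it completes the triangular generators $z_j=x_{a_j}q_{j,1}+q_{j,2}$ of $K_i$ with the remaining $x_\ell$ to a transcendence basis $y_1,\ldots,y_n$ generating $\frpolx$, and then reads off the annihilator coordinate by coordinate. That route uses the triangular shape of the $z_j$, whereas yours needs only their algebraic independence.
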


\begin{proof}
Let $\bbF(\bm y_n)=\bbF(y_1,\ldots,y_n)$ be the field of rational functions in the variables $y_i$. Suppose that 
$D=\sum_{i=1}^n a_iD_{y_i}\in \textrm{Der}_\bbF(\bbF(\bm y_n))$. Then, for $k\leq n$, 
$D\in\textrm{Ann}(\bbF(y_1,\ldots,y_k))$ if and only if $a_i=D(y_i)=0$ for $i=1,\ldots,k$. That is 
\[ 
\dim \textrm{Ann}(\bbF(y_1,\ldots,y_k))=n-k
\] 
where the dimension is understood as the dimension over the field $\bbF(\bm y_n)$.

Suppose now that $K_i$ is as in the statement of the lemma.  Then, by Theorem \ref{teor_presDerTriangAd}, 
$K_i$ is generated by the triangular system \(z_{1}, \dots, z_{m} \in \polx\) 
with index sequence \(a_{1} < \cdots < a_{m}\). 
Recall that \(z_{j} = x_{a_{j}}q_{j,1} + q_{j,2}\) with \(q_{j,1}, q_{j,2} \in \bbF[x_{1}, \dots, x_{a_{j}-1}]\) 
and \(q_{j,1} \neq 0\). In particular, we have for all $j\in\{1,\ldots,m\}$ that 
$\bbF(x_1,\ldots,x_{a_j})=\bbF(x_1,\ldots,x_{a_j-1},z_j)$. 
Define \(y_{1}, \dots, y_{n}\) by \(y_{a_{j}} = z_{j}\) for \(j = 1, \dots, m\) and \(y_{\ell} = x_{\ell}\) 
for \(\ell \notin \{a_{1}, \dots, a_{m}\}\). Then $y_1,\ldots,y_n$ is an algebraically independent 
system of generators of $\frpolx$ and the argument in the first paragraph of the proof implies that 
\begin{equation}\label{eq:dimann}
\dim \textrm{Ann}(K_i)=\dim \textrm{Ann}(\bbF(y_{a_1},\ldots,y_{a_m}))=n-m.
\end{equation} 

We now prove that \(S_{i} = \Ann(K_{i})\) by induction on \(i\). The claim is clearly true for $i=0$.
 Suppose \(S_{i-1} = \Ann(K_{i-1})\) for some \(i \in \{1, \dots, n\}\). 
 If $\ad(x_i)\in S_{i-1}$, then $K_i=K_{i-1}$ and so 
 \[ 
 \textrm{Ann}(K_i)=\textrm{Ann}(K_{i-1})=S_{i-1}=S_i.
 \] 
 If $\ad(x_i)\not \in S_{i-1}$, then $\dim S_i=\dim S_{i-1}+1$. 
 Furthermore, the transcendence degree of $K_i$ is one less than 
 the transcendence degree of $K_{i-1}$ and so \eqref{eq:dimann} implies that 
 \[ 
 \dim \textrm{Ann}(K_i)=\dim \textrm{Ann}(K_{i-1})+1.
 \] 
 Since 
 $S_i\subseteq \textrm{Ann}(K_i)$ and the dimensions agree,  $S_i=\textrm{Ann}(K_i)$ must hold.
\end{proof}
\begin{corollary} \label{coro_FLLgrauTras}
    Let \(L\) be a nilpotent Lie algebra with basis \(x_{1}, \dots, x_{n}\) and let 
    \[ 
        M_{L} = \big([x_{i}, x_{j}]\big) \in \Mat_{n \times n}\big(\frpolx\big)
    \] 
     be its multiplication matrix. Then \(\bbF(L)^{L}\) is a purely transcendental extension of \(\bbF\) and it holds that 
    \[
        \trdeg\bbF(L)^{L} = \dim(L) - \text{rank}(M_{L}),
    \]
    where \(\trdeg\big(\bbF(L)^{L}\big)\) is the transcendence degree of \(\bbF(L)^{L}\).
\end{corollary}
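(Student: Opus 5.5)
We would begin by reducing to a triangular basis. Every nilpotent Lie algebra admits a basis adapted to its upper central series, which is a triangular basis in the sense of Section~\ref{sec:inv_nilp_lie}. Both sides of the claimed formula are independent of the chosen basis. The field $\bbF(L)^L$ is defined intrinsically. Replacing the basis by another one changes $M_L$ to $P^{T} M_L' P$ with $P\in\mathrm{GL}_n(\bbF)$, since the entries $[x_i,x_j]$ are bilinear in the basis vectors, and such a change does not alter the rank over $\frpolx$. So we may assume that $x_1,\ldots,x_n$ is triangular.

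Pure transcendence then follows directly from Theorem~\ref{teor_presDerTriangAd}. By part (a), $\bbF(L)^L=K_n$ is generated by the triangular system $z_{n,1},\ldots,z_{n,m_n}$, and by part (b) this system is algebraically independent. Hence $K_n$ is purely transcendental over $\bbF$ with $\trdeg K_n=m_n$. Note that the theorem is stated for $i\ge1$, with $K_1=\frpolx$ because $x_1$ is central. This is harmless.

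For the transcendence degree we use Lemma~\ref{lem_annKiNilpLie} with $i=n$, which gives $S_n=\Ann(K_n)$. Equation~\eqref{eq:dimann} in its proof shows that $\dim_{\frpolx}\Ann(K_n)=n-m_n$. Therefore
\[
\trdeg \bbF(L)^L = m_n = n-\dim_{\frpolx} S_n .
\]
It remains to identify $\dim S_n$ with $\mathrm{rank}(M_L)$. Write $\ad(x_i)=\sum_{j}[x_i,x_j]D_{x_j}$, which follows from $\ad(x_i)(x_j)=[x_i,x_j]$ and the Leibniz rule. The coordinate vector of $\ad(x_i)$ in the $\frpolx$-basis $D_{x_1},\ldots,D_{x_n}$ of $\derfrpolx$ is then exactly the $i$-th row of $M_L$. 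Consequently the $\frpolx$-span $S_n$ of $\ad(x_1),\ldots,\ad(x_n)$ has dimension equal to the row rank of $M_L$ over $\frpolx$. This yields the formula.

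The only step needing care is the bookkeeping behind equation~\eqref{eq:dimann}. We must make sure that $\dim \Ann(K_n)=n-m_n$ is genuinely established for the generating set produced by Theorem~\ref{teor_presDerTriangAd}. This uses the change of variables $y_{a_j}=z_{n,j}$, which gives a transcendence basis of $\frpolx$ generating the whole field. That holds because each $z_{n,j}$ is linear in $x_{a_j}$ with a nonzero coefficient in $\bbF[x_1,\ldots,x_{a_j-1}]$. Everything else is formal.
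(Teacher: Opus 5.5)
Your proposal is correct and follows the paper's proof in substance. You reduce to a triangular basis, obtain pure transcendence from Theorem~\ref{teor_presDerTriangAd}, and match the count with $\mathrm{rank}(M_L)$ through Lemma~\ref{lem_annKiNilpLie} and the identification of the rows of $M_L$ with the coefficient vectors of the $\ad(x_i)$. The only difference is bookkeeping: you apply $S_n=\Ann(K_n)$ and \eqref{eq:dimann} once at $i=n$, whereas the paper counts the steps with $\ad(x_i)|_{K_{i-1}}\neq 0$ and matches them, via the lemma's ``in particular'' equivalence, with the rows $L_i$ that raise the rank.
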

\begin{proof}
By Theorem \ref{teor_presDerTriangAd}, \(\bbF(L)^{L}\) is a purely transcendental extension of \(\bbF\). In this case, the transcendence degree of \(\bbF(L)^{L}\) is equal to the number of algebraically independent generators given by the theorem. We have that the transcendence degree is equal to 
    \[
        \trdeg\bbF(L)^{L} = \dim(L) - \delta,
    \]
    where \(\delta\) is equal to the number of times the derivation \(\ad(x_{i})|_{K_{i-1}}\) is non-trivial, for each \(i = 2, \dots, n\). We need to show that \(\delta = \text{rank}(M_{L})\). 
    Since the rank of $M_L$ is independent of the chosen basis for $L$, we may assume without 
    loss of generality that $x_1,\ldots,x_n$ is a trianular basis.
    Let \(L_{i}\) be the \(i\)-th row of \(M_{L}\) and note that  $L_i$ is the coefficient vector of 
    $\ad(x_i)$. Also note that \(\ad(x_{i})|_{K_{i-1}} = 0\) means that \(\frpolx^{\ad(x_{1})} \cap \cdots \cap \frpolx^{\ad(x_{i})} = \frpolx^{\ad(x_{1})} \cap \cdots \cap \frpolx^{\ad(x_{i-1})}\), that is, 
    \[ \frpolx^{\ad(x_{1})} \cap \cdots \cap \frpolx^{\ad(x_{i-1})} \subseteq \frpolx^{\ad(x_{i})}.\] Thus, we must show that \(\frpolx^{\ad(x_{1})} \cap \cdots \cap \frpolx^{\ad(x_{i-1})} \subseteq \frpolx^{\ad(x_{i})}\) if and only if \(\ad(x_{i})\) belongs to the submodule of \(\derfrpolx\) generated by \(\ad(x_{1}), \dots, \ad(x_{i-1})\). But this is precisely the equivalence in Lemma \ref{lem_annKiNilpLie}.
    Hence $\delta$ is equal to the number of times $\ad(x_i)$ does not belong to the 
    submodule generated by $\ad(x_1),\ldots,\ad(x_{i-1})$, which is equal to the 
    number of times the row $L_i$ does not belong to the $\frpolx$-subspace generated by $L_1,\ldots,L_{i-1}$. 
    Therefore, \(\trdeg\big(\bbF(L)^{L}\big) = \dim(L) - \text{rank}(M_{L})\).
    %\hfill\(\boxtimes\)
\end{proof}
Corollary \ref{coro_FLLgrauTras} can be obtained from Observations 1.14.13 and 4.9.24 in \cite{Dix96}, for the case where \(L\) is algebraic; there, it is stated (without proof) that the index of \(L\) is equal to the transcendence degree of \(\bbF(L)^{L}\) and also equal to the number \(\dim(L) - \text{rank}(M_{L})\). Moreover, the fact that \(\bbF(L)^{L}\) is a purely transcendental extension is demonstrated in \cite[Proposition 1]{Dix57}. Our proof differs from that of Dixmier by being more constructive.

\section{The algorithm and its implementation to compute invariants of nilpotent Lie algebras}
\label{sec:algorithm}

Given a finite-dimensional nilpotent Lie algebra $L$ with structure constant table 
in a given basis, a triangular basis (as in Section~\ref{gerAlgInvarRac_algoritmoLieNilp}) 
can be obtained by computing a basis $\{x_1,\ldots,x_n\}$ such that 
$\{x_1,\ldots,x_n\}\cap \zeta_i(L)$ is a basis for $\zeta_i(L)$ for all $i$ 
where $\zeta_i(L)$ is the $i$-th term of the upper central series of $L$ starting with 
the center $\zeta_1(L)=Z(L)$ (see, for example, \cite[page~21]{deGraaf}). 

Suppose that $\{x_1,\ldots,x_n\}$  is a triangular basis for a nilpotent Lie algebra $L$ such that 
\[ 
\{x_1,\ldots,x_n\}\cap Z(L)=\{x_1,\ldots,x_{r-1}\}.
\]  
Using the notation of Theorem~\ref{teor_presDerTriangAd}, 
$K_0=\cdots=K_{r-1}=\frpolx$ and the first proper invariant field is $K_{r}$. Now, 
the generators of $K_r$ as in Theorem~\ref{teor_nucPolDerTriIncGerFrac} can be computed directly either by the method of integral curves or 
using the Dixmier map as explained in Sections~\ref{sec:triang} 
and~\ref{derivTriang_invPol}.

However, to compute the subsequent terms $K_{i+1}$ for $i\geq r$, a bit more care is required. 
Recall from Theorem~\ref{teor_presDerTriangAd} that 
$K_i$ is generated by a generating system $z_1,\ldots,z_m$ whose terms are of the form 
\[ 
        z_{j}=x_{a_{j}}q_{j,1}+q_{j,2}
        \] 
such that $1\leq a_1<\cdots<a_m\leq n$, $q_{j,1},q_{j,2}\in \bbF[x_1,\ldots,x_{a_{j}-1}]$ and $q_{j,1}\in \bbF[L]^L\setminus\{0\}$.
Set $D=\ad(x_{i+1})$. We know from Theorem~\ref{teor_presDerTriangAd} that $D$ is a triangular 
derivation, and so $D(z_j)\in\bbF[z_1,\ldots,z_{j-1}]$. In order to 
apply the method of integral curves or the Dixmier map to compute $K_{i+1}=K_i^D$, 
we need to write each $D(z_j)$ as an explicit polynomial in $z_1,\ldots,z_{j-1}$.  
This is possible by the triangular nature of the generating system $z_1,\ldots,z_m$. In fact, 
the system $z_1,\ldots,z_m$ is a SAGBI basis for the algebra $R=\bbF[z_1,\ldots,z_m]$ 
and this fact facilitates membership testing in $R$.
SAGBI bases were introduced in~\cite{RS} and in~\cite{KM}. 
The usefulness of SAGBI bases in computing invariants has been recognised by several 
authors, see for instance~\cite{kuroda,bianchi,stillman,Reichstein}.

The procedure for computing generators for the invariant field of nilpotent Lie algebras 
was implemented by the authors in the computational algebra system SageMath~\cite{Sage}. 
The implementation of the procedure is available in the GitHub repository~\cite{ghrepo}. 
We close this paper by showing a couple of example computations.

\subsection{Nilpotent Lie algebras of low dimension}

Let us first show how to compute the rational invariant field for the standard filiform Lie algebra 
\[ 
L=\left<x,y_0,y_1,y_2,y_3\mid [x,y_i]=y_{i-1}\mbox{ for $i\geq 1$}\right>
\]  
of dimension~5.

\begin{lstlisting}
sage: from examples_lie_algs import standard_filiform_lie_algebra
sage: from rational_invariants import RationalInvariantField
sage: l = standard_filiform_lie_algebra(5)
sage: K = RationalInvariantField(l, has_triangular_basis=True)
sage: [ K.to_symmetric_space(x) for x in K.gens()]
[y0, y0*y2 - 1/2*y1^2, y0^2*y3 - y0*y1*y2 + 1/3*y1^3]\end{lstlisting}

In fact, it is well known  that 
\[
\bbF(x,y_0,y_1,y_2,y_3)^L=\bbF(y_0,-y_1^2 + 2y_0y_2,y_1^3 - 3y_0y_1y_2 + 3y_0^2y_3)
\]
(see, for example, \cite{JS23}).
The algebra $\bbF[x,y_0,y_1,y_2,y_3]^L$ of polynomial invariants has an additional generator: $(z_2^3+z_3^2)/y_0^2$ where $z_2$ and $z_3$ are the second and the third generators in 
the last displayed equation. 

The classification of small-dimensional nilpotent Lie algebras contains parametric families;
for example, in~\cite{CdGSch}, the family $L_{6,19}(\varepsilon)$ is defined as  
    \begin{align*}
    L_{6,19}(\varepsilon)=\left<x_1,\ldots,x_6\mid\right. &[x_5,x_2]=x_1, 
    [x_4,x_3]=x_1,[x_6,x_3]=\varepsilon x_1,\\ 
    &\left.[x_4,x_5]=x_2,[x_4,x_6]=x_3\right>.
    \end{align*}
    Here $\varepsilon\in\bbF$ and if $\varepsilon_1\varepsilon_2\neq 0$, then 
    $L_{6,19}(\varepsilon_1)\cong L_{6,19}(\varepsilon_2)$ if and only if $\varepsilon_1\varepsilon_2^{-1}$ is a square in
 $\bbF$. The rational invariants of $L_{6,19}(\varepsilon)$ can be computed with our 
 implementation as follows.

\begin{lstlisting}[inputencoding=utf8x]
sage: from examples_lie_algs import lie_alg_family_6_19
sage: from rational_invariants import RationalInvariantField
sage: l = lie_alg_family_6_19(QQ)
sage: K = RationalInvariantField(l, has_triangular_basis=True)
sage: [ K.to_symmetric_space(x) for x in K.gens()]
[x1, x1*x6 + (-<@$\varepsilon$@>)*x1*x4 + (-1/2)*x3^2 + (-1/2*<@$\varepsilon$@>)*x2^2]
\end{lstlisting}
Note how the output indicates that the generators of the invariant field depend on  
the parameter $\varepsilon$. 

\subsection{The algebra of strictly upper triangular matrices}
\label{gerAlgInvarRac_matTriagEstr}
In the article \cite{BPP07}, the analytic invariants of \(\fraku_{n}\), the algebra of strictly upper triangular \(n \times n\) matrices, are determined. In it, we have the following result.
\begin{theorem} \label{teor_algTn}
    Let \(E_{ij} \in \Mat_{n\times n}(\bbF)\) be the coordinate function corresponding to the 
    elementary matrix in which the $(i,j)$-entry is one and the other entries are equal to zero.
%    %
%    \[
%        (E_{ij})_{k\ell} \coloneqq \left\{
%        \begin{array}{l}
%            1, \; \text{ if } i = k \text{ and } j = \ell,  \\
%            0, \; \text{ otherwise}.
%        \end{array}
%        \right.
%    \]
    %
    The generators of the algebra of analytic invariants of \(\fraku_{n}\) are
    given by the formal determinants
    \[
        \det\big((E_{ij})_{j=n-k+1, \dots, n}^{i = 1, \dots, k}\big), \; \text{ for } k = 1, \dots, \left\lfloor \dfrac{n}{2} \right\rfloor. \rlap{\hspace{133pt}\(\Box\)}
    \]
\end{theorem}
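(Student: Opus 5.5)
We take the triangular basis of $\fraku_n$ adapted to the upper central series. Order the coordinates $E_{ij}$, $i<j$, by decreasing $j-i$, so that $E_{1n}$ (the centre) comes first. The approach is to show that the minors $\Delta_k=\det\big((E_{ij})^{i=1,\dots,k}_{j=n-k+1,\dots,n}\big)$, for $1\le k\le\lfloor n/2\rfloor$, are invariants, that they are algebraically independent, and that their number equals $\trdeg\bbF(\fraku_n)^{\fraku_n}$. We then upgrade from the rational field to the generating statement using Theorem~\ref{teor_presDerTriangAd}(d) together with a leading-term argument.

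\textbf{Invariance.} Identify $\fraku_n^*$ with lower triangular matrices via the trace form. The coadjoint action of the unipotent group $U_n$ is then $\xi\mapsto \pi(g\xi g^{-1})$, where $\pi$ is the projection to the relevant entries. The block consisting of the top-right $k\times k$ entries of $g\,\xi\,g^{-1}$ is transformed by left multiplication by the top-left $k\times k$ block of $g$ and right multiplication by the bottom-right block of $g^{-1}$. The entries of $\xi$ that are killed by $\pi$ do not enter this block. Both multiplying blocks are unitriangular, so $\Delta_k$ is preserved. Differentiating the group action gives $\ad(x)(\Delta_k)=0$ for all $x$. Alternatively, one can check $\ad(E_{ab})(\Delta_k)$ directly: $\ad(E_{ab})$ acts on the rows and columns of the block by elementary operations that add a multiple of one row to another within the block, or that add a zero row.

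\textbf{Counting.} Compute $\operatorname{rank} M_{\fraku_n}$ over $\frpolx$ by evaluating at a generic point. Choose the "antidiagonal" functional with $E_{i,n+1-i}$ nonzero, and compute the dimension of its stabilizer in $\fraku_n$; the expected result is $\lfloor n/2\rfloor$. By Corollary~\ref{coro_FLLgrauTras}, $\trdeg=\lfloor n/2\rfloor$. The $\Delta_k$ are algebraically independent, because under lex order their leading monomials $\prod_{i\le k}E_{i,n+1-i}$ involve the distinct new variables $E_{k,n+1-k}$. Hence they generate a subfield over which $\bbF(\fraku_n)^{\fraku_n}$ is algebraic.

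\textbf{Main obstacle.} Showing that the $\Delta_k$ generate the whole invariant field, and then the polynomial invariant algebra, is the hardest step. I would run the procedure of Theorem~\ref{teor_presDerTriangAd} symbolically. At each stage the slice denominator $f_k$ should be, up to sign, a ratio of consecutive $\Delta$'s, and the resulting generators should reduce to the $\Delta_k$ modulo the earlier ones. This identification is a Dixmier-map computation (Theorem~\ref{th:dix}), and it requires tracking Schur complement identities in the minors.

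Given this, polynomial generation follows by induction on the leading monomial, as in Theorem~\ref{teor_nucPolDerTriIncGerFrac}(3). Any polynomial invariant $h$ has leading term a monomial in the leading terms $\operatorname{lt}(\Delta_k)$, since every admissible leading monomial arises that way by Lemma~\ref{lema_nucPolDerTriMonLidXk}. Subtracting the corresponding product of $\Delta_k$'s with an $\bbF$-coefficient lowers $\lm(h)$ without introducing denominators. It is exactly this denominator-free subtraction that I expect to need the explicit structure of $\fraku_n$.
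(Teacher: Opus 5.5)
The paper gives no proof of this statement; it is quoted from \cite{BPP07}. Your argument therefore has to stand on its own, and as written it has a genuine gap.

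Your invariance and counting steps are essentially sound, with two repairs. First, under the trace form the coordinate $E_{ij}$ reads the $(j,i)$-entry of $\xi$. The relevant block is therefore the bottom-left $k\times k$ block of $g\xi g^{-1}$, which transforms by left multiplication by the bottom-right block of $g$ and right multiplication by the top-left block of $g^{-1}$. Your direct check is cleaner: it amounts to $\ad(E_{ab})$ acting on the block as $NM_k-M_kN'$ with $N,N'$ nilpotent. Second, the stabilizer computation must actually be done, not just ``expected''. For $\xi=\sum_{i\le\lfloor n/2\rfloor}E^*_{i,n+1-i}$, the radical of $(x,y)\mapsto\xi([x,y])$ is spanned by the $E_{i,n+1-i}$. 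Since specialising can only lower the rank of $M_{\fraku_n}$, this gives $\trdeg\le\lfloor n/2\rfloor$.

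But these steps only yield that $\bbF(\fraku_n)^{\fraku_n}$ is \emph{algebraic} over $\bbF(\Delta_1,\ldots,\Delta_{\lfloor n/2\rfloor})$. Equality, and then generation of the polynomial invariants, is exactly what you set out to prove. Your ``main obstacle'' paragraph only describes what a Dixmier-map computation should give; nothing there is established.

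The polynomial step also rests on a false inference. Lemma~\ref{lema_nucPolDerTriMonLidXk} only says that the leading monomial of a polynomial killed by one triangular derivation avoids one variable. Applied to all $\ad(x_i)$, it restricts which variables may occur in $\lm(h)$; it does not show that $\lm(h)$ is a product of the $\lm(\Delta_k)$. Take $n=4$ with $E_{14}<E_{13}<E_{24}<E_{12}<E_{23}<E_{34}$. The lemma excludes $E_{13},E_{24},E_{12},E_{34}$ and so allows every $E_{14}^aE_{23}^b$. Products of $\lm(\Delta_1)=E_{14}$ and $\lm(\Delta_2)=E_{14}E_{23}$, however, need $a\ge b$. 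That every leading monomial of an invariant is such a product is precisely the SAGBI property to be proved. Theorems~\ref{teor_nucPolDerTriIncGerFrac}(3) and~\ref{teor_presDerTriangAd}(d) cannot supply it, since they only give membership in a localisation.

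One way to close both gaps without Schur-complement bookkeeping is as follows. The same stabilizer computation shows that the orbit map $U_n\times S\to\fraku_n^*$, with $S=\{\sum_i c_iE^*_{i,n+1-i}\}$, is dominant: the tangent space to the orbit is the annihilator of the stabilizer, and it meets $S$ trivially. The denominator of an invariant rational function is itself invariant, because $\ad(x)$ is locally nilpotent. So it cannot vanish identically on $S$, since it would then vanish on the dense set $U_n\cdot S$. Hence, as in Proposition~\ref{prop_isoFGammaKerD}, restriction to $S$ embeds $\bbF(\fraku_n)^{\fraku_n}$ into $\bbF(c_1,\ldots,c_{\lfloor n/2\rfloor})$. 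That field is already generated by the restrictions $\Delta_k|_S=\pm c_1\cdots c_k$, so $\bbF(\fraku_n)^{\fraku_n}=\bbF(\Delta_1,\ldots,\Delta_{\lfloor n/2\rfloor})$.

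For polynomials, use the diagonal torus, which normalises $\fraku_n$. The $\Delta_k$ are weight vectors with linearly independent weights, so every weight component of a polynomial invariant is a scalar times a Laurent monomial in the $\Delta_k$. Since the $\Delta_k$ are irreducible and pairwise non-associate, its exponents must be non-negative.

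Finally, if the statement is read as in \cite{BPP07}, namely as asserting a functional basis of analytic invariants, then your invariance, independence and count already suffice (once the stabilizer is computed). Near a generic point, every analytic invariant is a function of any $\dim L-\text{rank}(M_L)$ functionally independent ones. The gap concerns only the stronger algebraic statements you chose to prove.
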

\begin{example} \label{exem_matTriSupEst8}
    Consider the Lie algebra \(\fraku_{8}\). This algebra has a basis \(\{E_{ij} \mid 1 \le i < j \le 8\}\). Then, by Theorem \ref{teor_algTn}, we have that the algebra of analytic invariants of \(\fraku_{8}\) is generated by the formal determinants of the following matrices.
    \[
        M_{1} =
        \begin{pmatrix}
            E_{18}
        \end{pmatrix},
        \text{\hspace{1em}}
        M_{2} =
        \begin{pmatrix}
            E_{17} & E_{18} \\
            E_{27} & E_{28}
        \end{pmatrix},
        \text{\hspace{1em}}
        M_{3} =
        \begin{pmatrix}
            E_{16} & E_{17} & E_{18} \\
            E_{26} & E_{27} & E_{28} \\
            E_{36} & E_{37} & E_{38}
        \end{pmatrix}
\] 
and  
\[ 
        M_{4} =
        \begin{pmatrix}
            E_{15} & E_{16} & E_{17} & E_{18} \\
            E_{25} & E_{26} & E_{27} & E_{28} \\
            E_{35} & E_{36} & E_{37} & E_{38} \\
            E_{45} & E_{46} & E_{47} & E_{48}
        \end{pmatrix}.
    \]
    For example,
    \[
        \det(M_{1}) = E_{18} \quad \text{ and } \quad \det(M_{2}) = E_{17}E_{28} - E_{18}E_{27}.
    \]
    Arranging the basis \(\{E_{ij} \mid 1 \le i < j \le 8\}\) in a strictly upper triangular matrix \(M\) where the position \((M)_{ij} = E_{ij}\), we have that the matrices \(M_{1}\), \(M_{2}\), \(M_{3}\), and \(M_{4}\) are given as below.
    \begin{center}
        \begin{tikzpicture}
            \draw node at (-0.45,0) {
                \(
                    M = 
                    \begin{pmatrix}
                        0 & E_{12} & E_{13} & E_{14} & E_{15} & E_{16} & E_{17} & E_{18} \\
                        0 & 0 & E_{23} & E_{24} & E_{25} & E_{26} & E_{27} & E_{28} \\
                        0 & 0 & 0 & E_{34} & E_{35} & E_{36} & E_{37} & E_{38} \\
                        0 & 0 & 0 & 0 & E_{45} & E_{46} & E_{47} & E_{48} \\
                        0 & 0 & 0 & 0 & 0 & E_{56} & E_{57} & E_{58} \\
                        0 & 0 & 0 & 0 & 0 & 0 & E_{67} & E_{68} \\
                        0 & 0 & 0 & 0 & 0 & 0 & 0 & E_{78} \\
                        0 & 0 & 0 & 0 & 0 & 0 & 0 & 0
                    \end{pmatrix}
                \)
            };
            \draw[thick, red] (2.6,1.4) rectangle (3.4,2.05);
            \draw[thick, green] (1.75,1) rectangle (3.45,2.15);
            \draw[thick, magenta] (0.8,0.5) rectangle (3.6,2.25);
            \draw[thick, blue] (-0.1,-0.05) rectangle (3.75,2.35);
            \draw node (M1) at (5, 2) {\(M_{1}\)};
            \draw node (M2) at (5, 1) {\(M_{2}\)};
            \draw node (M3) at (5, 0) {\(M_{3}\)};
            \draw node (M4) at (5, -1) {\(M_{4}\)};
            \draw[thick, red, ->] plot [smooth] coordinates {(3.4,1.6) (4,1.7) (4.6,2)};
            \draw[thick, green, ->] plot [smooth] coordinates {(3.45,1) (4,1.5) (4.6,1.2)};
            \draw[thick, magenta, ->] plot [smooth] coordinates {(3.6,0.5) (3.8,0.8) (4.2,0) (4.6,0)};
            \draw[thick, blue, ->] plot [smooth] coordinates {(3.75,-0.05) (4,-0.5) (4.6,-0.9)};
        \end{tikzpicture}
    \end{center}
    The picture illustrates how to read off these invariants.
\end{example}

\begin{lstlisting}[language=python]
sage: from examples_lie_algs 
           import lie_algebra_upper_triangular_matrices
sage: from rational_invariants import RationalInvariantField
sage: l = lie_algebra_upper_triangular_matrices(8, strict=True)
sage: K = RationalInvariantField(l, has_triangular_basis=True)
sage: K.to_symmetric_space(K.gens()[0])
x18
sage: K.to_symmetric_space(K.gens()[1])
x18*x27 - x17*x28
sage: K.to_symmetric_space(K.gens()[2])
x18^2*x27*x36 - x18*x17*x28*x36 - x18^2*x26*x37 + 
x18*x28*x16*x37 + x18*x17*x38*x26 - x18*x16*x27*x38
\end{lstlisting}
Note that there is a fourth generator corresponding to the formal determinant of the matrix
$M_4$, but it is too large to be displayed.

The implementation can compute the invariants of these Lie algebras over $\bbQ$ up to $n=13$ (dimension 78); the computation times for each case are recorded in Table~\ref{tab_tempoCalcTN}. 
All calculations were performed on a computer with a 12th Gen Intel\textsuperscript{\textregistered} Core\texttrademark\ i7-12700H processor, running at 2.4 GHz with 16 GB of memory, using version 10.9 of SageMath.
\begin{table}[H]
    \caption{Time taken to calculate the algebraically independent generators of \(\bbQ(\fraku_{n})^{\fraku_{n}}\).}
    \label{tab_tempoCalcTN}
    \centering
    \renewcommand{\arraystretch}{1.2}
    \begin{tabular}{|C{120pt}|C{45pt}|C{45pt}|C{45pt}|C{45pt}|C{45pt}|}
        \hline
        Matrix size & 3 & 4 & 5 & 6 & 7 \\
        \hline
        Dimension & 3 & 6 & 10 & 15 & 21 \\ 
        \hline 
        Calculation time & 20 ms  & 28 ms & 60 ms & 144 ms & 419 ms \\
        \hline
    \end{tabular}
    \vspace{10pt}

    \renewcommand{\arraystretch}{1.2}
    \begin{tabular}{|C{50pt}|C{50pt}|C{50pt}|C{50pt}|C{50pt}|C{50pt}|}
        \hline
        8 & 9 & 10 & 11 & 12 & 13 \\
        \hline
        28 & 36 & 45 & 55 & 66 & 78\\
        \hline
        856 ms & 2 s & 4.4 s & 9 s & 20 s & 51 s\\
        \hline
        \hline
    \end{tabular}
\end{table}

\end{document}